\documentclass[a4paper,10pt]{amsart}
\usepackage{amsmath,amsthm,amssymb,amsfonts,enumerate,color}
\input{amssym.def}
\input{amssym.tex}
\usepackage{verbatim}

\oddsidemargin = 9pt \evensidemargin = 9pt \textwidth = 440pt

\newtheorem{theorem}{Theorem}[section]
\newtheorem{lemma}[theorem]{Lemma}
\newtheorem{corollary}[theorem]{Corollary}
\newtheorem{proposition}[theorem]{Proposition}
\theoremstyle{definition}

\newtheorem{example}[theorem]{Example}

\newtheorem{remark}[theorem]{Remark}

\numberwithin{equation}{section}

\newcommand\beqn{\begin{equation}}
\newcommand\neqn{\end{equation}}
\newcommand{\al}{\alpha}   
\newcommand{\N}{\mathbb{N} } 
\newcommand{\R}{\mathbb{R} } 
\newcommand{\C}{\mathbb{C} } 

\newcommand{\HH}{\Bbb H}

\newcommand{\norm}[1]{\left\Vert#1\right\Vert}
\newcommand{\abs}[1]{\left\vert#1\right\vert}
\newcommand{\set}[1]{\left\{#1\right\}}

\newcommand{\Bc}{\mathcal{B}}

\allowdisplaybreaks

\begin{document}

\title[Extension problem and fractional operators]{Extension problem and fractional operators: \\ semigroups and wave equations}

\author[J. E. Gal\'e]{Jos\'{e} E. Gal\'{e}}
\address{Departamento de Matem\'{a}ticas e I. U. M. A.
Universidad de Zaragoza, 50009 Zaragoza, Spain}
\email{gale@unizar.es, pjmiana@unizar.es}

\author[P. J. Miana]{Pedro J. Miana}

\author[P. R. Stinga]{Pablo Ra\'ul Stinga}
\address{Department of Mathematics,
The University of Texas at Austin, 
1 University Station, C1200, Austin, 
TX 78712-1202, USA}
\email{stinga@math.utexas.edu}

\thanks{2010 {\it Mathematics Subject Classification.} Primary: 35C15,  35K05, 35L05, 47D06, 47D09, 47D62. Secondary: 35R01, 35R03, 35J10, 35J70, 46J15, 46N20, 47A52, 26A33}

\keywords{Extension problem, fractional operator, Dirichlet-to-Neumann map, heat equation, wave equation, operator semigroup, integrated families}

\thanks{Research partially supported by Project MTM2010-16679, DGI-FEDER, of the MCYTS, Spain, and Project E-64, D.G. Arag\'on, Spain. The third author was partially supported by grant MTM2011-28149-C02-01 from Spanish Government.}

\begin{abstract}
We extend results of Caffarelli--Silvestre and Stinga--Torrea regarding a characterization of fractional powers of differential operators via an extension problem. Our results apply to generators of integrated families of operators, in particular to infinitesimal generators of bounded $C_0$ semigroups and operators with purely imaginary symbol. 
We give integral representations to the extension problem in terms of solutions to the heat equation and the wave equation.
\end{abstract}

\maketitle

\section{Introduction and main results}

Motivated by the study of regularity properties of solutions to nonlinear equations involving the fractional Laplacian 
$(-\Delta)^\sigma$, $0<\sigma<1$, L. Caffarelli and L. Silvestre looked for solutions of the Bessel-type differential equation
\begin{equation}\label{extension Laplacian}
\left\{
  \begin{array}{ll}
    \displaystyle u_{yy}+\tfrac{1-2\sigma}{y}u_y=-\Delta u, & y>0, \\
    u(0)=f,
  \end{array}
\right.
\end{equation}
where $f\in L^2(\mathbb{R}^n)$ and $u:[0,\infty)\to L^2(\mathbb{R}^n)$ is of class $C^2$.
They interpreted the function $u$ satisfying the equation above as the harmonic extension of $f$ to a fractional dimension $2-2\sigma$. Then $u$ is given by a Poisson type integral formula in terms of $f$. The remarkable property widely used in applications is that the nonlocal operator $(-\Delta)^\sigma$ acting on $f$ in the domain of  $\Delta$ is localized through $u$. More precisely, there exists a constant $c_\sigma<0$ such that
\begin{equation}\label{localization}
\lim_{y\to0^+}y^{1-2\sigma}u_y(y)=c_\sigma(-\Delta)^\sigma f,
\end{equation}
see \cite{Caffarelli-Silvestre}. For applications of the method see for instance \cite{Caffarelli-Salsa-Silvestre} and \cite{Caffarelli-Vasseur}.

Let us consider the {extension} problem \eqref{extension Laplacian} with $\Delta$ being replaced by a generic (closed) linear operator $A$. 
In \cite{Stinga-Torrea}, P. R. Stinga and J. L. Torrea gave a novel point of view to the extension problem by appealing to operator semigroup theory. 
This allowed them to get a general Poisson formula in terms of the semigroup $e^{tA}$ 
(that applies to many particular cases) and to get a Harnack inequality
for the fractional harmonic oscillator. 
Moreover, Bessel functions were used for the first time in \cite{Stinga-Torrea} to solve and analyze the extension problem. 
On the base of those tools they extended the Caffarelli--Silvestre theorem to a fairly general class 
of positive selfadjoint operators $D=-A$ having dense domains on $L^2$-spaces. Such a class includes, {for instance, the Laplacian in bounded domains,
elliptic Schr\"odinger operators $D=-\operatorname{div}(A\nabla)+V$ with suitable potentials $V$ and (weighted)
Laplace--Beltrami operators on (weighted) Riemannian manifolds. 
The approach to the extension problem given in \cite{Stinga-Torrea} has been applied in \cite{StingaZhang} to get Harnack's inequalities
for fractional powers of elliptic differential operators with measurable coefficients, and in \cite{RoncalStinga} 
to obtain a boundary Harnack inequality for the fractional Laplacian on the torus. Also some results of \cite{Stinga-Torrea}, like the general Poisson formula for the solution of the extension
problem, found applications in \cite{Ferrari-Franchi}, where Harnack's inequality for fractional sub-Laplacians in Carnot groups
are derived, and 
in \cite{Banica-Gonzalez-Saez}, where the fractional
Laplacians on the hyperbolic space and on some noncompact manifolds are studied.

Nevertheless, the results of \cite{Caffarelli-Silvestre} and \cite{Stinga-Torrea} do not seem to apply, totally or partially, to other important differential operators as, for example, those having purely imaginary symbol, in particular Schr\"odinger operators $A=i(\Delta+V)$, 
or those operators pointed out in Section \ref{Section:Applications} below. On the other hand,
besides its theoretical significance (see \cite{Yosida} for instance), fractional powers of operators appear in many concrete settings. 
{They occur,} for instance, when considering fractional kinetics and anomalous transport \cite{Zaslavsky}, in fractional quantum mechanics
\cite{Hu-Kallianpur} and fluid dynamics \cite{Caffarelli-Vasseur}, and also in mathematical finance when modeling with L\'evy processes
\cite{Caffarelli-Salsa-Silvestre}. Recently, the characterization (\ref{localization}) given in \cite{Caffarelli-Silvestre} has been used
to show that the fractional Laplacian $(-\Delta)^\sigma$ coincides with a certain conformally covariant operator $P_\sigma$ on
the hyperbolic space $\R^{n+1}_+$ from which $\R^n$ is to be seen as its boundary \cite{Chang-Gonzalez}.

Hence it seems sensible to characterize fractional operators $(-A)^\sigma$, in terms of solutions of local equations like in (\ref{localization}), for the widest possible class of operators $A$.
In the present paper, we show that the semigroup structure revealed in \cite{Stinga-Torrea}, 
as underlying problem \eqref{extension Laplacian}, 
can be subsumed in a more general framework which includes generators of $\alpha$-times 
integrated semigroups  and certain distribution semigroups. Working within that setting, we cover 
a wide range of important operators. Moreover, we are able to find new expressions for the solution to the extension problem that involves solutions to the associated wave equation.

To state the two main results we quickly introduce some notation and definitions. Let $\Bc(X)$ denote the Banach algebra of bounded operators on a Banach space $(X,\|\cdot\|)$. For $\alpha\geq0$, let $(T_\al(t))_{t\ge0}$ be a strongly continuous on $[0,\infty)$ family in ${\mathcal B}(X)$ such that 
$\sup_{t>0}t^{-\al}\Vert T_\al(t)\Vert<\infty$.

Suppose that there exists a (unique) closed and densely defined operator $A$ on $X$, and  
such that
$\lambda-A$ is invertible whenever $\Re\lambda>0$, with resolvent function
\begin{equation}\label{resolv}
(\lambda-A)^{-1}f=\lambda^\al\int_0^\infty e^{-\lambda t}T_\al(t)f\,dt,\quad \Re\lambda>0,~f\in X.
\end{equation}
Then we say that $(T_\alpha(t))_{t\ge0}$ is a {\it globally tempered} (or {\it tempered}, for short) {\it $\alpha$-times integrated semigroup} in $\Bc(X)$, and that $A$ is its 
{\it generator}.
Analogously, if $(T_\al(t))_{t\ge0}$ and $A$ are as before but, instead of (\ref {resolv}), they satisfy the relation
\begin{equation}\label{resolcos}
(\lambda^2-A)^{-1}f=\lambda^{\al-1}\int_0^\infty e^{-\lambda t}T_\al(t)f\,dt,\quad \Re\lambda>0,~f\in X,
\end{equation}
then we say that $(T_\alpha(t))_{t\ge0}$ is a {\it tempered $\alpha$-times integrated cosine family} 
in $\Bc(X)$ with {\it generator} $A$ (see \cite{ElMennaoui-Keyantuo}, for instance). Cosine families extend to $\R$ as even functions. 
Generators of integrated semigroups or integrated cosine families   
admit fractional powers $(-A)^{\sigma}$ in the Balakrishnan sense for every 
$ {0<\sigma<1}$, see \eqref{fractpow} below.

Integrated semigroups of integer order were first considered by W. Arendt in \cite{Arendt}, and of fractional order by M. Hieber in \cite{Hieber-Forum}. These semigroups are useful,
for instance, to obtain solutions of  
ill-posed abstract Cauchy problems, and even for non-densely defined operators. F. Neubrander also found applications
to second order Cauchy problems and non-autonomous equations in \cite{Neubrander}. 
Hieber gave applications to harmonic analysis and pseudodifferential operators, see subsection 2.4 in the present article.
For some detailed notes on these semigroups and  applications,  we refer the reader to  \cite[Section 3.17]{Arendt-Batty-Hieber-Neubrander}.

By $W^\alpha$ we denote the Weyl fractional derivative, see Section \ref{Section:Functions} for definitions. For real $\theta$ such that $0\le\theta\le\pi/2$, set
$S_\theta=\set{z\in\C:\abs{\arg{z}}<\theta}$ if $\theta>0$, and $S_0=(0,\infty)$.
Put $\C^+:=S_{\pi/2}=\{z\in\C:\Re z>0\}$.

\begin{theorem}\label{extension}
Fix $\alpha\ge0$. Let $A$ be the generator of a tempered $\alpha$-times integrated semigroup $(T_\alpha(t))_{t\ge0}\subseteq\Bc(X)$ {and} $0<\sigma<1$. Then a solution
$u:S_{\pi/4}\to\Bc(X)$ to the (vector-valued) differential equation
\begin{equation}\label{ExtSemig}
\left\{
  \begin{array}{ll}
    \displaystyle u''(z)+\tfrac{1-2\sigma}{z}\,u'(z)=-Au(z), & z\in S_{\pi/4}; \\
    \displaystyle\lim_{z\to0,z\in S_{\pi/4-\eta}}u(z)=f, & f\in X, 0<\eta\le\pi/4;
  \end{array}
\right.
\end{equation}
is given by
\begin{equation}\label{solution}
u(z)=\frac{z^{2\sigma}}{4^\sigma\Gamma(\sigma)}
\int_0^\infty W^\al\left(\frac{e^{-z^2/(4t)}}{t^{1+\sigma}}\right)T_\alpha(t)f\,dt,
\quad z\in S_{\pi/4}.
\end{equation}
This solution is uniformly bounded on subsectors of $S_{\pi/4}$; that is, there exists $C>0$ such that
$$
{\Vert u(z)\Vert<C\Vert f\Vert, \quad\hbox{for all}~z\in S_\eta,}
$$
for every $\eta$ such that $0\le\eta<\pi/4$.

Moreover, if $f\in {\mathcal D}(A)$, the domain of $A$, then
\begin{equation}\label{zwei}
\lim_{z\to0^+}\frac{{u(z)}-f}{z^{2\sigma}}=c_\sigma(-A)^\sigma f
={1\over 2\sigma}\lim_{{z\to0}}z^{1-2\sigma}{u'(z)},
\end{equation}
where $c_\sigma=4^{-\sigma}\Gamma(-\sigma)\Gamma(\sigma)^{-1}<0$ and both limits hold through proper subsectors of $S_{\pi/4}$.

Conversely, for $f\in {\mathcal D}(A)$ and $z\in S_{\pi/4}$, we have
\begin{equation}\label{ein}
u(z)=\lim_{\varepsilon\to0^+}\frac{1}{\Gamma(\sigma)}
\int_0^\infty W^\al\left(\frac{e^{-z^2/(4t)}e^{-\varepsilon t}}{t^{1-\sigma}}\right)
T_\al(t)(-A)^\sigma f\,dt,
\end{equation}
where the limit holds uniformly on compact subsets of $S_{\pi/4}$, and,
for $f\in {\mathcal D}(A)$ and $z\in \overline S_{\pi/4}$,
\begin{equation}\label{solfract}
\displaystyle u(z)=f+\frac{1}{\Gamma(\sigma)}
\int_0^\infty W^\al\left(\frac{e^{-z^2/(4t)}-1}{t^{1-\sigma}}\right)T_\al(t)(-A)^\sigma f\,dt.
\end{equation}
\end{theorem}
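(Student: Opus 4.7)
The plan is to reduce every claim to the scalar identity
$$\partial_z^2 K_\sigma(z,t) + \frac{1-2\sigma}{z}\,\partial_z K_\sigma(z,t) = \partial_t K_\sigma(z,t), \qquad K_\sigma(z,t):=\frac{z^{2\sigma}}{4^\sigma\Gamma(\sigma)}\,\frac{e^{-z^2/(4t)}}{t^{1+\sigma}},$$
which is the heat-subordination form of the Caffarelli--Silvestre equation and follows from a direct computation of derivatives in $z$ and $t$. Since $W^\alpha$ operates only in $t$, it commutes with $\partial_z$ and with $\partial_t$, so the same identity survives after $K_\sigma$ is replaced by $W^\alpha_t K_\sigma$. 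Together with an explicit shape for $W^\alpha_t K_\sigma(z,t)$ (essentially $z^{2\sigma}t^{-1-\sigma-\alpha}$ times a bounded holomorphic function of $z^2/t$) and the growth $\|T_\alpha(t)\|\le Ct^\alpha$, this gives absolute convergence of (\ref{solution}) on each subsector $S_\eta$, $\eta<\pi/4$, via the estimate $\Re(z^2)\ge(\cos 2\eta)|z|^2>0$; the uniform bound $\|u(z)\|\le C\|f\|$ is then immediate.

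For the ODE in (\ref{ExtSemig}), I would differentiate (\ref{solution}) twice under the integral, apply the scalar identity, and integrate by parts in $t$ against $T_\alpha(t)f$. The boundary terms vanish thanks to the Gaussian decay at $+\infty$ and the $O(t^\alpha)$ control at $0^+$; what remains is a Laplace-transform expression which I would identify with $-Au(z)$ through the resolvent formula (\ref{resolv}). The cleanest way to avoid differentiating $T_\alpha$ directly is to rewrite $W^\alpha$ as a fractional convolution and invoke Fubini to push the $t$-integration inside the inverse-Laplace representation of $(\lambda-A)^{-1}$. For the boundary condition, the change of variable $r=z^2/(4t)$ shows that $u(z)$ acts as a scalar approximate identity on $f$ modulo an analytic $O(z^2)$ perturbation; combined with the uniform bound, a density argument over $\mathcal{D}(A)$ yields $u(z)\to f$ through every proper subsector.

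The Balakrishnan limits in (\ref{zwei}) I would obtain by factoring $z^{2\sigma}$ outside (\ref{solution}), subtracting $f$, and Taylor-expanding $e^{-z^2/(4t)}=1-z^2/(4t)+\cdots$ inside $W^\alpha_t$. The leading term, after identifying $W^\alpha_t t^{-1-\sigma}$ with a constant multiple of $t^{-1-\sigma-\alpha}$ and applying (\ref{resolv}) at $\lambda=0^+$, reproduces $c_\sigma(-A)^\sigma f$ with $c_\sigma=4^{-\sigma}\Gamma(-\sigma)/\Gamma(\sigma)$, matching the Balakrishnan representation of $(-A)^\sigma$. The companion limit involving $z^{1-2\sigma}u'(z)$ follows either by the same expansion applied to $u'$ or, more economically, by integrating the ODE once to get $z^{1-2\sigma}u'(z)=-\int_0^z s^{1-2\sigma}Au(s)\,ds$ and matching leading orders.

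For (\ref{ein}) and (\ref{solfract}) I would substitute $f=(-A)^{-\sigma}(-A)^\sigma f$ into (\ref{solution}) via the Balakrishnan formula for $(-A)^{-\sigma}$, which together with (\ref{resolv}) converts the weight $t^{-1-\sigma}$ into $t^{\sigma-1}$; the regularization $e^{-\varepsilon t}$ legitimizes Fubini, and sending $\varepsilon\to 0^+$ yields (\ref{ein}). Then (\ref{solfract}) follows by combining (\ref{ein}) with the boundary identity $u(0)=f$ and noting that $e^{-z^2/(4t)}-1$ kills the singularity at $t=0^+$, making the resulting integral absolutely convergent up to the closed sector $\overline{S_{\pi/4}}$. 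The principal obstacle throughout is justifying the exchanges between the unbounded operator $A$, the vector-valued integrals, and the fractional derivative $W^\alpha$; I expect that a systematic passage through the resolvent formula (\ref{resolv})---that is, working Laplace-side in $\lambda$ rather than directly with $T_\alpha(t)$---keeps every manipulation within the realm of absolutely convergent Bochner integrals.
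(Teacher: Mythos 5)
Your overall architecture matches the paper's: the scalar subordination kernel $b^{\sigma,z}(t)=\tfrac{z^{2\sigma}}{4^\sigma\Gamma(\sigma)}e^{-z^2/(4t)}t^{-1-\sigma}$ satisfies the extension PDE in $(z,t)$, the Weyl derivative acts only in $t$, and the equation is transferred to $u(z)=\int_0^\infty W^\alpha b^{\sigma,z}(t)\,T_\alpha(t)f\,dt$ via the pairing with $T_\alpha$ and the resolvent identity. That part, together with the bound on subsectors and the boundary condition via the normalization $\int_0^\infty W^\alpha b^{\sigma,z}(t)\tfrac{t^\alpha}{\Gamma(\alpha+1)}dt=\int_0^\infty b^{\sigma,z}(t)\,dt=1$ plus density of $\mathcal D(A)$, is essentially the paper's argument (the paper packages the transfer as a bounded Banach-algebra homomorphism $\pi_\alpha$ on the Sobolev algebra $\mathcal T^{(\alpha)}(t^\alpha)$, with $-A\pi_\alpha(\varphi)f=\pi_\alpha(\varphi')f+\varphi(0)f$, which is cleaner than integrating by parts in the Laplace domain but amounts to the same thing).

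There are, however, two genuine gaps. First, your mechanism for producing $(-A)^\sigma$ in \eqref{zwei} does not work as described. "Applying \eqref{resolv} at $\lambda=0^+$" is not available: the hypothesis only gives invertibility of $\lambda-A$ for $\Re\lambda>0$, and $0$ may well lie in the spectrum (e.g.\ $A=\Delta$ on $\R^n$). Likewise, the term-by-term Taylor expansion of $e^{-z^2/(4t)}$ produces divergent integrals: already the linear term gives $W^\alpha(t^{-2-\sigma})\sim t^{-2-\sigma-\alpha}$ against $\|T_\alpha(t)f\|\le Ct^\alpha$, i.e.\ $t^{-2-\sigma}$ near $t=0$, and even after subtracting $\tfrac{t^\alpha}{\Gamma(\alpha+1)}f$ (which upgrades the decay to $O(t^{\alpha+1})$ via $T_{\alpha+1}(t)Af$) the term is $\sim t^{-1-\sigma}$, still non-integrable at $0$. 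What is actually needed — and what the paper proves as a separate result (its Theorem~\ref{Thm:fractional power}) — is the identity
$$(-A)^\sigma f=\frac{\Gamma(\sigma+\alpha+1)}{\Gamma(-\sigma)\Gamma(1+\sigma)}\int_0^\infty\Bigl(T_\alpha(t)f-\tfrac{t^\alpha}{\Gamma(\alpha+1)}f\Bigr)\frac{dt}{t^{\sigma+\alpha+1}},\qquad f\in\mathcal D(A),$$
obtained by a Fubini exchange between the Balakrishnan integral \eqref{fractpow} and the Laplace representation \eqref{resolv}; the limits in \eqref{zwei} are then reached by dominated convergence after splitting the $t$-integral at $\sqrt{\Re(z^2)}$, not by expanding the exponential. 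Second, for \eqref{ein} you cannot write $f=(-A)^{-\sigma}(-A)^\sigma f$, because $(-A)^{-\sigma}$ need not exist as a bounded operator for the same spectral reason. The correct substitute is $(\varepsilon-A)^{-\sigma}(-A)^\sigma f\to f$ as $\varepsilon\to0^+$ (a nontrivial fact from fractional power theory), together with the identification $(\varepsilon-A)^{-\sigma}=\pi_\alpha(e^{-\varepsilon\,\cdot}h^\sigma)$ and the convolution identity $B^{\sigma,z}=h^\sigma\ast b^{\sigma,z}$, which is what makes the multiplicativity of $\pi_\alpha$ convert the weight $t^{-1-\sigma}$ into $t^{\sigma-1}$. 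Finally, a small correction: in \eqref{solfract} the subtraction of $1$ in $e^{-z^2/(4t)}-1$ cures the behaviour at $t=\infty$ (where $t^{\sigma-1}$ paired with $T_\alpha$ is not integrable), not at $t=0^+$, where $B^{\sigma,z}$ is already harmless.
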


\begin{remark}
\normalfont
Formula (\ref{solfract}) extends the solution $u$ which {is initially} defined on the open sector $S_{\pi/4}$ to its closure, the closed sector $\overline S_{\pi/4}$.
\end{remark}

Each of the equalities in (\ref{zwei}) can be seen as a limit of localizations of the (in general) nonlocal operator $(-A)^\sigma$. The second one provides in this context the characterization of the fractional power
$(-A)^\sigma$ as a correspondence from the {Dirichlet boundary} condition $f\in\mathcal D(A)$ to the {Neumann}-type boundary condition
$\lim_{z\to0^+}z^{1-2\sigma}u'(z)$.

As mentioned before, we get a new formula for the solution $u$ in terms of the solution to the wave equation. We will use the notation $\mathcal C_\al(t)$ replacing $T_\alpha(t)$ to refer to cosine families.

\begin{theorem}\label{extensionCosine}
Let $A$ be the generator of a tempered $\al$-times integrated cosine function
$(\mathcal C_\al(s))_{s\in\R}\subseteq{\mathcal B}(X)$. Then the extension problem (\ref{ExtSemig}) for $A$ admits a solution $u(z)$ holomorphic in $z\in\C^+$ given by
\begin{equation}\label{solutionCosine}
u(z)=d_\sigma
\int_0^\infty W^\al\left({z^{2\sigma}\over(z^2+t^2)^{\sigma+1/2}}\right)
\, \mathcal C_\al(t)f\, dt,
\end{equation}
where ${d_\sigma=\frac{2\Gamma(\sigma+1/2)}{\sqrt\pi\Gamma(\sigma)}}$.

In addition, if $f\in\mathcal D(A)$ then  the solution $u(z)$ can be alternatively written for $z\in\C^+$ as
\begin{equation}\label{einCosine}
u(z)=f+\kappa_\sigma\int_0^\infty
W^\al\left((z^2+t^2)^{\sigma-{1/2}}-t^{2\sigma-1}\right)\,
{\mathcal C}_\al(t)(-A)^\sigma f\, dt,
\end{equation}
whenever $\sigma\not=1/2$, for
$\kappa_\sigma
={\frac{2\Gamma(1/2-\sigma)}{4^\sigma\sqrt{\pi}\Gamma(\sigma)}}$, or
$$
u(z)=f+{1\over\pi}
\int_0^\infty
W^\al \left({\rm Log}\left({t^2\over z^2+t^2}\right)\right)\,{\mathcal C}_\al(t)(-A)^{1/2}f\, dt,
$$
if $\sigma=1/2$. Here $\rm{Log}$ is the principal branch of the logarithm with argument in $[0,2\pi)$, and the fractional powers appearing in the integrals are referred to this logarithm.
\end{theorem}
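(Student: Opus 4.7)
My plan is to mirror the semigroup proof of Theorem~\ref{extension} but with the Poisson-type kernel for the Bessel operator that is naturally attached to the wave equation rather than the heat equation. Set
$$K_\sigma(z,t):=\frac{z^{2\sigma}}{(z^2+t^2)^{\sigma+1/2}},\qquad z\in\CC^+,\ t>0,$$
so that the integrand of \eqref{solutionCosine} is $d_\sigma\,W^\al K_\sigma(z,t)\,\mathcal C_\al(t)f$. A direct calculation gives the key identity
$$\partial_z^2 K_\sigma+\frac{1-2\sigma}{z}\,\partial_z K_\sigma+\partial_t^2 K_\sigma=0$$
on $\CC^+\times(0,\infty)$ (equivalently, the $z^{1-2\sigma}$-weighted Laplacian in $(z,t)$ annihilates $K_\sigma$), while the change of variable $t=z\tan\theta$ yields $d_\sigma\int_0^\infty K_\sigma(z,t)\,dt=1$, fixing the constant $d_\sigma$.

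With the kernel identity in hand, the verification of \eqref{solutionCosine} consists of differentiating twice in $z$ under the integral, applying the identity to rewrite the left-hand side of \eqref{ExtSemig} as $-d_\sigma\int_0^\infty W^\al\partial_t^2 K_\sigma(z,t)\,\mathcal C_\al(t)f\,dt$, and integrating by parts twice in $t$. Since $W^\al$ commutes with $\partial_t$ and the generator relation extracted from \eqref{resolcos} by Laplace inversion reads $\partial_t^2\mathcal C_\al(t)f=A\mathcal C_\al(t)f$ on $\mathcal D(A)$ modulo polynomial initial-data terms of the form $t^{\al-2}/\Gamma(\al-1)\,f$ which are annihilated by $W^\al$, this transfers $A$ onto $\mathcal C_\al$ and produces $-Au(z)$. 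The boundary contributions at $t=0$ and $t=\infty$ vanish thanks to the tempered bound $\|\mathcal C_\al(t)\|\le Ct^\al$ combined with the $t^{-2\sigma-1-\al}$ decay of $W^\al K_\sigma$ and its $t$-derivatives. The limit $u(z)\to f$ as $z\to0$ in $\CC^+$ follows from the rescaling $t=zs$, dominated convergence and the normalization above, and holomorphy on $\CC^+$ is immediate from holomorphy of $K_\sigma$ in $z$ together with a locally integrable majorant. An equivalent derivation of \eqref{solutionCosine} proceeds by subordinating the cosine family to an integrated semigroup through the Weierstrass transmutation, plugging into \eqref{solution}, and carrying out the Gaussian integral $\int_0^\infty t^{-\sigma-3/2}e^{-(z^2+s^2)/(4t)}\,dt=4^{\sigma+1/2}\Gamma(\sigma+1/2)(z^2+s^2)^{-\sigma-1/2}$, which produces exactly the wave kernel after Fubini.

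For \eqref{einCosine} I would use the Balakrishnan representation
$$(-A)^\sigma f=\frac{\sin(\pi\sigma)}{\pi}\int_0^\infty\lambda^{\sigma-1}(-A)(\lambda-A)^{-1}f\,d\lambda,\qquad f\in\mathcal D(A),$$
combined with \eqref{resolcos} to rewrite $(-A)^\sigma f$ as an integral of $\mathcal C_\al(t)(-A)f$ against an explicit power of $t$. Subtracting $f$ from \eqref{solutionCosine} (using $d_\sigma\int K_\sigma(z,t)\,dt=1$), swapping the order of integration and evaluating the resulting $\lambda$-integral by a beta-function identity produces the kernel $(z^2+t^2)^{\sigma-1/2}-t^{2\sigma-1}$ with constant $\kappa_\sigma$ when $\sigma\neq1/2$; the $t^{2\sigma-1}$ subtraction is precisely what cancels the pole of $\Gamma(1/2-\sigma)$ at $\sigma=1/2$. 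The logarithmic formula is then recovered by the limit $\sigma\to1/2$ using
$$\lim_{\sigma\to1/2}\frac{(z^2+t^2)^{\sigma-1/2}-t^{2\sigma-1}}{\sigma-1/2}=\mathrm{Log}\!\left(\frac{z^2+t^2}{t^2}\right)$$
together with $\lim_{\sigma\to1/2}(1/2-\sigma)\Gamma(1/2-\sigma)=1$, which delivers the factor $1/\pi$. The main obstacle will be the bookkeeping in the integration-by-parts step for non-integer $\al$: justifying the commutation of $W^\al$ with $\partial_t$, sharply controlling the boundary terms, and justifying the Fubini interchanges in the double integrals arising from either the subordination route or the Balakrishnan route. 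The $\sigma=1/2$ case of \eqref{einCosine} is a secondary subtlety requiring the limiting argument above rather than a direct substitution.
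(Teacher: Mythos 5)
Your ``equivalent derivation'' of \eqref{solutionCosine} --- transmuting the cosine family into the holomorphic semigroup via \eqref{CosToSemi}, inserting this into the semigroup solution \eqref{solutionSemi}, and evaluating $\int_0^\infty t^{-\sigma-3/2}e^{-(z^2+s^2)/(4t)}\,dt=4^{\sigma+1/2}\Gamma(\sigma+1/2)(z^2+s^2)^{-\sigma-1/2}$ --- is exactly the paper's proof, and it is the efficient one: all operator-theoretic work is already contained in Theorem \ref{GenSemig}, leaving only a scalar Fubini interchange. Your primary route (checking that $K_\sigma$ is annihilated by the weighted Laplacian and integrating by parts twice in $t$) is workable in principle but carries precisely the burdens you list, and one step is misstated: after the integration by parts the second-derivative identity $\partial_t^2\mathcal C_\al(t)f=\tfrac{t^{\al-2}}{\Gamma(\al-1)}f+A\mathcal C_\al(t)f$ (valid only for $f\in\mathcal D(A)$, so a density argument is needed since \eqref{solutionCosine} is claimed for all $f\in X$) produces the term $\bigl(\int_0^\infty W^\al K_\sigma(z,t)\,\tfrac{t^{\al-2}}{\Gamma(\al-1)}\,dt\bigr)f$, which is \emph{not} ``annihilated by $W^\al$'' --- $W^\al$ acts on the kernel, not on the orbit; that integral equals $-\partial_tK_\sigma(z,t)\big|_{t=0}\,f$, which does vanish here, but only after an explicit check, not by the mechanism you invoke.

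The genuine gap is in your plan for \eqref{einCosine}. The target formula has $\mathcal C_\al(t)(-A)^\sigma f$ under the integral, whereas the Balakrishnan representation combined with \eqref{resolcos} (or with the cosine analogue of \eqref{integra}) produces expressions in $\mathcal C_\al(t)(-A)f$ or $\mathcal C_{\al+2}(t)Af$. Passing from one to the other is not a beta-function identity: it requires a convolution functional calculus for the cosine family, whose natural product is the d'Alembert convolution $k_1\ast_c k_2(r)=\tfrac12\int\bigl(k_1(t)k_2(\abs{r-t})+k_1(t)k_2(r+t)\bigr)\,dt$ rather than the half-line convolution, and your outline supplies none of this. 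The paper sidesteps the issue entirely by substituting \eqref{CosToSemi} into \eqref{solfractSemi}, where $(-A)^\sigma f$ is already in place; what remains is the scalar integral $F(s,y)=\int_0^\infty e^{-s^2/(4t)}(e^{-y^2/(4t)}-1)\,t^{\sigma-3/2}\,dt$, evaluated in closed form for $0<\sigma<1/2$ and extended by analytic continuation in $\sigma$ (the integral is analytic for $\Re\sigma<3/2$), with $\sigma=1/2$ handled by a direct Frullani-type computation. Your alternative of recovering the logarithmic case as a limit $\sigma\to1/2$ is reasonable but adds the obligations of justifying the interchange of the limit with the $t$-integral and of proving $(-A)^\sigma f\to(-A)^{1/2}f$. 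I recommend routing both halves of the theorem through the transmutation formula and the already-proved semigroup statements.
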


In the classical case $A=\Delta$ on $\R^N$, the integrated cosine family in the statement above $v_\al(t)=\mathcal{C}_\al(t)f$ provides by derivation a mild solution to the wave equation with initial data $f$ and
null initial velocity
$$
\left\{
  \begin{array}{ll}
    v_\al''(t)-\Delta v_\al=0, & t>0, \\
    v_\al(0)=f,~v_\al'(0)=0.
  \end{array}
\right.
$$
The representation given in \eqref{solutionCosine}-\eqref{einCosine} of the solution to the extension problem via the solution to the wave equation is new. In the case of \eqref{extension} it reads
$$
u(x,y)=d_\sigma\int_0^\infty W^\al\left(\frac{y^{2\sigma}}
{(y^2+t^2)^{\sigma+1/2}}\right)\,v_\al(x,t)\,dt,
$$
for $\al>(N-1)\vert{1\over 2}-{1\over p}\vert$, since this is the value of $\al$ for $\Delta$ to generate an integrated cosine family in $L^p(\R^N)$, see \cite[Proposition 3.2]{ElMennaoui-Keyantuo}.

Uniformly bounded $C_0$-semigroups are globally tempered with $\alpha=0$. Then Theorem 
\ref{extension} extends in particular the results obtained in 
\cite[Sections~2~and~3]{Caffarelli-Silvestre} and \cite[Theorem~1.1]{Stinga-Torrea} to {\it all} infinitesimal
generators of bounded $C_0$-semigroups on Banach spaces. An important class of these semigroups are the so-called
bounded analytic semigroups $(T(z))_{z\in S_\theta}$ on sectors of the complex plane. It is known that a closed operator
$A$ is the infinitesimal generator of such a semigroup $T(z)$ if and only if it is the generator of a tempered integrated cosine family.
Hence, one can express the solution $u$ of (\ref{extension}) for such an operator $A$ by any of the formulas given in Theorem \ref{extension} and Theorem \ref{extensionCosine}.

As regards differential operators other than {those considered} in \cite{Caffarelli-Silvestre} and \cite{Stinga-Torrea}, and  to which Theorem \ref{extension} applies, one gets the differential operators $A$ on $L^p(\R^n)$ whose symbols are of the form $iq(\xi)$, $\xi\in\R$, where $q$ is a real elliptic polynomial or a ${C^{\infty}}$ homogeneous function on $\R^n\setminus\{0\}$ such that $q(\xi)=0$ implies $\xi=0$. Among them, there are the operators $\partial^{2n-1}/\partial x^{2n-1}$, $n\in\N$, in particular the operator $\partial^3/\partial x^3$ associated with the Korteweg--de Vries equation \cite{Hieber-Annalen}, or operators of the form $A=iL$ for suitable Laplacians $L$ on Riemannian manifolds or Lie groups \cite{Carron-Coulhon-Ouhabaz, Davies2, Davies3}. A more detailed list of examples are collected in
Section \ref{Section:Applications} . We also give there the proof of 
Theorem \ref{extensionCosine}.

Theorem \ref{extension} is proved in an abstract framework using Banach algebras and operator semigroup methods, so avoiding the dependency on the Fourier transform (and the Perron method) considered 
in \cite{Caffarelli-Silvestre} or on $L^2$ spectral methods (and Bessel functions) dealt with in 
\cite{Stinga-Torrea}. First we check a variant of equation (\ref{ExtSemig}) on some specific scalar functions; see Lemma \ref{Lem:b chica} (ii) and Lemma \ref{Lem:b grande} (iii). One of these functions lies in a suitable Sobolev algebra ${\mathcal T}^{(\al)}(t^\al)$, which is a convolution Banach algebra defined by means of {Weyl} fractional derivatives $W^\alpha$; see Section \ref{Section:Functions}. Then, via bounded algebra homomorphisms with domain
${\mathcal T}^{(\al)}(t^\al)$, the properties of these functions are transferred to operator semigroups.
On the way, a new formula for the Balakrishnan fractional power $(-A)^\sigma$ is given in Theorem 
\ref{Thm:fractional power} below in terms of the integrated semigroup $T_\alpha(t)$. Such a formula enters in the proof of 
Theorem \ref{extension}. 

Section \ref{Section:Functions} collects some preliminary results about functions in convolution Sobolev algebras. Section \ref{Section:Semigroups} contains elements of the integrated semigroups theory and the proof of Theorem \ref{Thm:fractional power}. 
 In Section \ref{Section:Proof} we prove  
Theorem \ref{extension}. Finally, we include in an appendix some observations on equation (\ref{ExtSemig}) in complex parameter 
$\sigma$.

Throughout the paper the letter $C$ denotes a constant that may change from line to line.

\section{Applications and related results}\label{Section:Applications}

\medskip
We start with the translation of Theorem \ref{extension} to the semigroup case

\subsection{Bounded $C_0$-semigroups}

As pointed out before, Theorem \ref{extension} is also valid for infinitesimal generators $A$ of uniformly bounded $C_0$-semigroups.
In this case such semigroups can be seen as the solution to the heat equation for $A$,
\begin{equation}\label{waveEq}
\left\{
  \begin{array}{ll}
    \displaystyle w'(s)=Aw(s), & \quad s{>}0; \\
    \displaystyle w(0)=f, & \quad f\in X.
  \end{array}
\right.
\end{equation}
The result reads as follows.

\begin{theorem}\label{GenSemig}
Let $A$ be the generator of a bounded $C_0$-semigroup $(T(t))_{t\ge0}\subseteq\Bc(X)$. Fix $0<\sigma<1$. Then a solution
$u:S_{\pi/4}\to\Bc(X)$ to the differential equation
(\ref{ExtSemig}) is given by
\begin{equation}\label{solutionSemi}
u(z)=\frac{z^{2\sigma}}{4^\sigma\Gamma(\sigma)}
\int_0^\infty \frac{e^{-z^2/(4t)}}{t^{1+\sigma}}T(t)f\,dt,
\quad z\in S_{\pi/4}.
\end{equation}
This solution is uniformly bounded on subsectors of $S_{\pi/4}$.

Moreover, if $f\in {\mathcal D}(A)$ then
\begin{equation}\label{zweiSemi}
\lim_{z\to0}\frac{{u(z)}-f}{z^{2\sigma}}=c_\sigma(-A)^\sigma f
={1\over 2\sigma}\lim_{z\to0}z^{1-2\sigma}u'(z),
\end{equation}
where $c_\sigma=4^{-\sigma}\Gamma(-\sigma)\Gamma(\sigma)^{-1}{<0}$ and both limits hold through proper subsectors of  
$S_{\pi/4}$.

Conversely, for $z\in S_{\pi/4}$ and $f\in {\mathcal D}(A)$, we have
\begin{equation}\label{einSemi}
\displaystyle u(z)=\lim_{\varepsilon\to0^+}
\int_0^\infty \frac{e^{-z^2/(4t)}e^{-\varepsilon t}}{t^{1-\sigma}}
T(t)(-A)^\sigma f\,\frac{dt}{\Gamma(\sigma)},
\end{equation}
where the limit holds uniformly on compact subsets of $S_{\pi/4}$, and, for $f\in {\mathcal D}(A)$ and
$z\in\overline S_{\pi/4}$,
\begin{equation}\label{solfractSemi}
\displaystyle u(z)=f+\frac{1}{\Gamma(\sigma)}
\int_0^\infty \frac{e^{-z^2/(4t)}-1}{t^{1-\sigma}}T(t)(-A)^\sigma f\,dt.
\end{equation}
\end{theorem}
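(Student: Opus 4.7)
My plan is to obtain Theorem \ref{GenSemig} as a direct specialization of Theorem \ref{extension} to the case $\alpha=0$, so the work amounts to checking that a bounded $C_0$-semigroup fits the framework of tempered $\alpha$-times integrated semigroups with $\alpha=0$, and then identifying the formulas.

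First I would verify the three conditions defining a tempered $0$-times integrated semigroup for the given family $(T(t))_{t\ge0}$. Strong continuity on $[0,\infty)$ is part of the definition of a $C_0$-semigroup. The temperedness condition $\sup_{t>0}t^{-\alpha}\|T(t)\|<\infty$ reduces for $\alpha=0$ to $\sup_{t>0}\|T(t)\|<\infty$, which is the standing boundedness hypothesis. Finally, the generator $A$ of a bounded $C_0$-semigroup is closed and densely defined, its resolvent set contains the open right half-plane, and the classical Hille--Yosida Laplace-transform identity
\[
(\lambda-A)^{-1}f=\int_0^\infty e^{-\lambda t}T(t)f\,dt,\quad \Re\lambda>0,\ f\in X,
\]
is precisely \eqref{resolv} with $\alpha=0$. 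Hence $A$ is the generator of the tempered $0$-times integrated semigroup $T_0(t):=T(t)$, and Theorem \ref{extension} applies.

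Next I would observe that the Weyl fractional derivative of order zero is the identity operator, $W^0=\mathrm{Id}$, as follows from the definition in Section \ref{Section:Functions} (or by inspection of the Laplace-transform characterization). Plugging $\alpha=0$ and $T_\alpha=T$ into the four formulas \eqref{solution}, \eqref{zwei}, \eqref{ein}, \eqref{solfract} of Theorem \ref{extension} yields \eqref{solutionSemi}, \eqref{zweiSemi}, \eqref{einSemi}, \eqref{solfractSemi} term by term; the claim of uniform boundedness on subsectors and the fact that the limits in \eqref{zweiSemi} are taken through proper subsectors of $S_{\pi/4}$ are inherited verbatim. The value and sign of the constant $c_\sigma=4^{-\sigma}\Gamma(-\sigma)/\Gamma(\sigma)<0$ are the same (note $\Gamma(-\sigma)<0$ for $0<\sigma<1$).

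I do not foresee a genuine obstacle here, since no extra analytic input is required beyond Theorem \ref{extension} itself; the only bookkeeping points are the verification that $W^0=\mathrm{Id}$ (so the integrands lose their Weyl derivatives) and the observation that $\alpha=0$ makes the tempered growth condition coincide with uniform boundedness of the semigroup. Thus Theorem \ref{GenSemig} follows as a corollary of Theorem \ref{extension}.
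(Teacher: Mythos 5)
Your proposal is correct and matches the paper's own treatment: the paper presents Theorem \ref{GenSemig} precisely as the $\alpha=0$ specialization of Theorem \ref{extension}, noting that a uniformly bounded $C_0$-semigroup is a tempered $0$-times integrated semigroup (the Hille--Yosida Laplace-transform identity being \eqref{resolv} with $\alpha=0$) and that $W^0=\mathrm{Id}$ with $\mathcal{T}^{(0)}(t^0)\equiv L^1(\mathbb{R}^+)$. No further argument is needed.
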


\begin{remark} 
(i) Formulas (\ref{solutionSemi}), (\ref{zweiSemi}) appear in \cite[Theorem~1.1]{Stinga-Torrea}, 
where they are obtained for generators of semigroups arising as nonnegative selfadjoint operators acting  on $L^2$.
Theorem \ref{GenSemig} above is valid for {\it all} infinitesimal generators of bounded $C_0$-semigroups on Banach spaces, so it supplies the widest possible extension of  \cite[Theorem~1.1]{Stinga-Torrea} in the setting of operator semigroups; in particular it applies to (the suitable ones) differential operators on $L^p(\Omega)$, $1\le p\le\infty$. The proof of (\ref{solutionSemi}) and (\ref{zweiSemi}) given here, see Section \ref{Section:Proof}, are different from those given in \cite[Theorem~1.1]{Stinga-Torrea} and do not rely on either the spectral theorem or the Fourier transform.

(ii) Formulas (\ref{einSemi}) and (\ref{solfractSemi}) are new. The weight $e^{-\varepsilon t}$ and corresponding limit in (\ref{einSemi}) must be included because the vector-valued mapping
$t\mapsto e^{-z^2/4t}t^{\sigma-1}T(t)(-A)^\sigma f$ is not necessarily Bochner integrable on $(0,\infty)$. When the space $X$ is assumed to be reflexive and $A$ is a spectral operator of scalar type (see \cite{Dunford-Schwarz}  for the basic theory of these operators), the weight and limit can be removed from (\ref{einSemi}). Thus the solution to problem 
(\ref{ExtSemig}) in this case takes also the form
$$
\displaystyle u(z)= {1\over\Gamma(\sigma)}
\int_0^\infty{e^{-z^2/(4t)}\over t^{1-\sigma}}T(t)(-A)^\sigma f\,dt,\quad z\in S_{\pi/4},~f\in{\mathcal D}(A).
$$
The proof is a slight refinement ot that given in \cite{Stinga-Torrea} for nonnegative self-adjoint differential operators on $L^2$ spaces; see \cite[formula (1.7)]{Stinga-Torrea}.
\end{remark}
\subsubsection{The case of analytic semigroups}
Many interesting examples of $C_0$-semigroups turn out to be analytic and bounded in sectors.

Let $\theta\in(0,\pi/2]$. An analytic semigroup $(T(z))_{z\in S_\theta}\subseteq\Bc(X)$ on $S_\theta$ is said to be 
{\it bounded} if
$\lim_{z\to0}T(z)f=f$ and $\Vert T(z)\Vert$ is bounded on every proper subsector of $S_\theta$.
As a matter of fact, one has that if $A$ generates a uniformly bounded $C_0$-semigroup then the fractional power operator
$-(-A)^\rho$ is the infinitesimal generator of a bounded analytic semigroup for ${0<\rho<1}$; see \cite[p.~238]{Arendt-Batty-Hieber-Neubrander} or \cite[p.~263]{Yosida}.
We are particularly interested in semigroups analytic in $\C^+=S_{\pi/2}$.

Let assume that $(e^{-tL})_{t>0}\subseteq\mathcal{B}(X)$ is a semigroup with generator $-L$ and that there exists 
$\tau\ge0$ such that 
the semigroup $e^{-t(\tau+L)}$ admits an extension as a bounded analytic semigroup in $\C^+$. Note that this means that for some $\nu\ge0$ and a constant $C_{\nu,\tau}$ the property
\begin{equation}\label{growthalfa}
\Vert e^{-zL}\Vert\le C_{\nu,\tau}e^{\tau \Re z}
\left({\vert z\vert\over\Re z}\right)^\nu, \quad  z\in\C^+,
\end{equation}
is fulfilled. See \cite[Lemma~2]{Davies2}, \cite[Theorem~20~and~25]{Davies3}, \cite[Theorem~4.3]{Carron-Coulhon-Ouhabaz}.

There are many examples of semigroups on $X=L^p(\Omega)$ satisfying property (\ref{growthalfa}). (Here $\Omega$ is an open subset of $\R^n$ or of a more general manifold.) Next, we collect some of them. 

\begin{example}\label{examples}
({\it i}) The Laplacian $-\Delta$ or, more generally, Schr\"odinger operators $L:=-\Delta+V$ with $V$ in the Kato class, and magnetic Schr\"odinger operators acting initially on $L^2(\R^N)$ \cite[p.~178]{Davies2}, \cite[p.~303]{Carron-Coulhon-Ouhabaz}. ({\it ii}) Laplace-Beltrami operators acting on complete Riemannian manifolds of bounded geometry or with non-negative Ricci curvature \cite[p.~178]{Davies2}, \cite[pp.~299,~302]{Carron-Coulhon-Ouhabaz}.
({\it iii}) Uniformly elliptic differential operators, of second or higher order, with measurable coefficients acting initially on 
$L^2(\Omega)$ for $\Omega=\R^N$ or a compact Riemannian manifold without boundary \cite[p.~178]{Davies2}, 
\cite[pp.~159,~162,~165]{Davies3}. ({\it iv}) Bochner Laplacians acting on sections of vector bundles \cite[p.~178]{Davies2}. ({\it v}) Sub-Laplacians acting on Lie groups of polynomial growth and stratified nilpotent Lie groups; see \cite[p.~299]{Carron-Coulhon-Ouhabaz} and references therein.
({\it vi}) Distinguished sub-Laplacians acting on the Heisenberg group \cite{Muller}, \cite{Muller-Stein}, and generalized Heisenberg groups \cite[Proposition~2.4.1]{Randall}.
\end{example}

Bounded analytic semigroups are also closely related with wave equations. We treat this relation in our context later on.

\subsubsection{{The extension problem and the wave equation}}

Let $A$ be a closed and densely defined operator on a Banach space $X$. The wave equation for $A$ {with null initial velocity} is
\begin{equation}\label{waveEq}
\left\{
  \begin{array}{ll}
    \displaystyle w''(s)=Aw(s), & \quad s{>}0; \\
    \displaystyle w(0)=f,~w'(0)=0, & \quad f\in {\mathcal{D}(A)}.
  \end{array}
\right.
\end{equation}

Equation (\ref{waveEq}) admits a {unique} classical solution if and only if $A$ is the generator of a cosine function $\mathcal C_0(s)$, with $w(s)=\mathcal C_0(s)f$ \cite[Theorem~8.2]{Goldstein}. When $A$ generates a $\nu$-times
integrated cosine function $\mathcal C_\nu(s)$ with $\nu>0$ then it is still possible to find mild solutions to problem (\ref{waveEq}); see
\cite{Arendt-Kellerman}. 

It is implicitly contained in \cite{ElMennaoui-Keyantuo} and \cite{Keyantuo} altogether that a closed
operator $A$ generates a tempered integrated cosine function if and only if $A$ is the infinitesimal generator of a bounded analytic semigroup on $\C^+$. In particular, and more precisely, if 
 $A$ is the generator of an $\al$-times integrated cosine function
$\mathcal C_\al$ such that  $\Vert\mathcal C_\al(s)\Vert\le C\vert s\vert^\al$, $s\in\R$, then the operator $T(z)$ in $\mathcal B(X)$ defined by
\begin{equation}\label{CosToSemi}
T(z)f:=\int_0^\infty W^\al\left({e^{-s^2/4z}\over \sqrt{\pi z}}\right)\mathcal C_\al(s)f\, ds,
\quad f\in X, z\in\C^+,
\end{equation}
is a holomorphic semigroup generated by $A$ and such that
$\Vert T(z)\Vert\le C(\vert z\vert/\Re z)^{\al+(1/2)}$, for all $z\in\C^+$. (This fact is shown in \cite[p.~142]{Keyantuo} for positive integer $\al$; for any fractional $\al$ the proof is similar).
We next proceed to give the proof of {Theorem \ref{extensionCosine}}. 

\begin{proof}[Proof of Theorem \ref{extensionCosine}]
{Substituting} the expression of $T(t)$ given by (\ref{CosToSemi}) in the formula (\ref{solutionSemi}) of $u(y)$ with $y>0$ one gets
\begin{align*}
u(y)&=\frac{y^{2\sigma}}{4^\sigma\Gamma(\sigma)}
\int_0^\infty \frac{e^{-y^2/(4t)}}{t^{1+\sigma}}
\int_0^\infty W^\al\left({e^{{-s^2/(4t)}}\over \sqrt{\pi t}}\right)\mathcal C_\al(s)f\, ds\,dt,\\
&=\frac{y^{2\sigma}}{4^\sigma\sqrt\pi\Gamma(\sigma)}
\int_0^\infty\,W^\al\left(\int_0^\infty{e^{-(y^2+s^2)/(4t)}\over t^{\sigma+1/2}}{dt\over t}\right)\,
\mathcal C_\al(s)f\,ds\\
&=\frac{y^{2\sigma}}{4^\sigma\sqrt\pi\Gamma(\sigma)}
\int_0^\infty\,W^\al\left(\int_0^\infty e^{-r}\left({4r\over y^2+s^2}\right)^{\sigma+1/2}{dr\over r}\right)\mathcal C_\al(s)f\,ds\\
&={2\Gamma(\sigma+1/2)\over\sqrt\pi\Gamma(\sigma)}
\int_0^\infty\,W^\al\left({y^{2\sigma}\over(y^2+s^2)^{\sigma+1/2}}\right)
\mathcal C_\al(s)f\,ds.
\end{align*}

{Let us} now assume that $f\in\mathcal D(A)$. Putting (\ref{CosToSemi}) in (\ref{solfractSemi}) we have that, for every $y>0$
and $\mu_\sigma:=(\sqrt{\pi}\Gamma(\sigma))^{-1}$,
$$
u(y)-f=\mu_\sigma
\int_0^\infty W^\al\left(\int_0^\infty {e^{-s^2/{(4t)}}(e^{-y^2/(4t)}-1)\over t^{1/2-\sigma}}
{dt\over t}\right) \mathcal C_\al(s)(-A)^\sigma f\, ds.
$$

In order to {compute} $\displaystyle F(s,y):=\int_0^\infty e^{-s^2/(4t)}(e^{-y^2/(4t)}-1)t^{\sigma-3/2}dt$, note that this integral is analytic in $\Re \sigma<3/2$. For a while, suppose that $\sigma$ is such that $0<\sigma<1/2$. Then, for $s,y>0$,
\begin{align*}
F(s,y)&=\int_0^\infty{e^{-(s^2+y^2)/(4t)}\over t^{1/2-\sigma}}{dt\over t}-
\int_0^\infty{e^{-s^2/(4t)}\over t^{1/2-\sigma}}{dt\over t}\\
&=
4^{(1/2)-\sigma}\Gamma(1/2-\sigma)
\left((s^2+y^2)^{\sigma-1/2}-(s^2)^{\sigma-1/2}\right).
\end{align*}
If $\sigma=1/2$, $a=(s^2+y^2)/4$ and $b=s^2/4$ we get by partial derivation in $a$
and $b$ under the integral that
\begin{align*}
F(s,y)&=\int_0^\infty(e^{-a/t}-e^{-b/t}){dt\over t}
=\int_0^\infty(e^{-ar}-e^{-br})\,{{dr\over r}}\\
&=\log(b/a)=\log\left({s^2\over y^2+s^2}\right), \quad\hbox{for all}~s,y>0.
\end{align*}
This proves the theorem.
\end{proof}

\subsubsection{Boundary values as integrated groups}

Boundary values of analytic semigroups in $\C^+$ give rise to integrated groups or semigroups. The following theorem is \cite[Corollary~5.3]{ElMennaoui}, see also \cite[Theorem~2.3]{ElMennaoui-Keyantuo}.

 \begin{theorem}\label{bdryValues}
Let $\nu\ge0$ and let $H$ be the infinitesimal generator of an analytic semigroup $T(z)$ in $\C^+$. The following assertions are equivalent.

(a) For all $\al>\nu$ there exists $\tau\ge0$ such that the semigroup $T(z)$ satisfies estimate (\ref{growthalfa}) with some constant $C_{\al,\tau}$.

(b) For all $\al>\nu$ the operator $iH$ generates a globally tempered $\al$-times integrated group $T_\al(it)$, $t\in\R$, of growth $\al$.
\end{theorem}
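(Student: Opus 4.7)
The plan is to exploit the Laplace/resolvent duality between the analytic semigroup $T(z)$ on $\C^+$ and the $\al$-times integrated group generated by $iH$ on the imaginary axis, proving each direction separately via resolvent estimates.

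For $(a)\Rightarrow(b)$: Fix $\al>\nu$ and pick an intermediate exponent $\nu'\in(\nu,\al)$. For $|\theta|<\pi/2$ the rotated $C_0$-semigroups $T_\theta(t):=T(e^{i\theta}t)$ have generator $e^{i\theta}H$ and, by (a) with exponent $\nu'$, satisfy
\[
\|T_\theta(t)\|\le C_{\nu',\tau}(\cos\theta)^{-\nu'}e^{\tau t\cos\theta},\qquad t\ge 0.
\]
Laplace transforming, for $\Re\mu>\tau\cos\theta$ the resolvent $(\mu-e^{i\theta}H)^{-1}=\int_0^\infty e^{-\mu t}T_\theta(t)\,dt$ exists with the corresponding norm bound. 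The heart of the argument is to pass to the limit $\theta\to\pi/2$: the blow-up of $(\cos\theta)^{-\nu'}$ must be absorbed by $\al$-fold integration in the $\mu$ variable (equivalently, by successive integration by parts against a $t^\al$-kernel in the Laplace integral). This would yield a resolvent estimate
\[
\|(\lambda-iH)^{-1}\|\le C\,\frac{|\lambda|^{\al-1}}{(\Re\lambda)^\al},\qquad\Re\lambda>0,
\]
of Hille--Yosida type for $\al$-times integrated semigroups. The Arendt--Hieber generation theorem (see \cite[Section~3.17]{Arendt-Batty-Hieber-Neubrander}) then identifies $iH$ as the generator of a tempered $\al$-times integrated semigroup of growth $\al$. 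Running the analogous argument with $\theta\to-\pi/2$ produces the same conclusion for $-iH$, and matching the two families on $\R_+$ and $\R_-$ yields the $\al$-times integrated group $T_\al(it)$, $t\in\R$.

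For $(b)\Rightarrow(a)$: Given the integrated group with $\|T_\al(it)\|\le C|t|^\al$, I would reconstruct $T(z)$ on $\C^+$ in two possible ways. Either (i) via a Gaussian subordination formula modeled on \eqref{CosToSemi}, namely $T(z)f:=\int_{-\infty}^\infty W^\al\bigl(\frac{e^{-s^2/(4z)}}{\sqrt{4\pi z}}\bigr)T_\al(is)f\,ds$ up to a normalizing constant, for which analyticity and the semigroup identity follow from Fubini and the functional equation of $T_\al$; or (ii) via a Dunford--Riesz contour integral $T(z)f=\frac{1}{2\pi i}\int_\gamma e^{z\mu}(\mu-H)^{-1}f\,d\mu$, where $(\mu-H)^{-1}$ is controlled off the real axis by the resolvent estimates \eqref{resolv} applied to $\pm iH$ combined with the rotation $\mu=\mp i\lambda$. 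In either approach, the growth bound is obtained by a pointwise estimate on the kernel (respectively, the Weyl derivative of the Gaussian or $|e^{z\mu}|$ on $\gamma$), producing $\|T(z)\|\le C_{\al',\tau}e^{\tau\Re z}(|z|/\Re z)^{\al'}$ for some $\al'>\nu$. Since both (a) and (b) are quantified ``for all $\al>\nu$'', the shift in exponent from $\al$ to $\al'$ is irrelevant.

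The main obstacle is the direction $(a)\Rightarrow(b)$, and more specifically the limit $\theta\to\pi/2$: the singular factor $(\cos\theta)^{-\nu'}$ must be precisely cancelled by the $\al$-fold integration while one simultaneously keeps track of the exponential weight $e^{\tau t\cos\theta}$. A clean execution requires a careful contour deformation in the Laplace integral, and in practice one likely introduces an auxiliary regularization (for example, multiplying by $e^{-\eps t^2}$ and letting $\eps\to0^+$) to justify the exchange of limit and integral. This quantitative boundary-value analysis of analytic semigroups is the technical core of the ElMennaoui--Keyantuo result being cited.
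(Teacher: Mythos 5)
First, a point of reference: the paper does not prove Theorem \ref{bdryValues} at all --- it is quoted from \cite[Corollary~5.3]{ElMennaoui} and \cite[Theorem~2.3]{ElMennaoui-Keyantuo}, so there is no internal argument to compare yours against. Judged on its own terms, your outline does follow the natural route (resolvent estimates for $iH$ on $\C^+$ extracted from the sectorial growth of $T(z)$, a Hille--Yosida-type generation theorem for tempered integrated semigroups, and a reconstruction of $T(z)$ from the boundary group for the converse), but as written it is a plan rather than a proof. In $(a)\Rightarrow(b)$ you correctly identify the limit $\theta\to\pi/2$ as the crux and then defer it; what is actually required is (1) the homogeneous estimate $\norm{(\lambda-iH)^{-1}}\le C\abs{\lambda}^{\nu'}/(\Re\lambda)^{\nu'+1}$ for all $\Re\lambda>0$ (note that the exponents you display, $\abs{\lambda}^{\al-1}/(\Re\lambda)^{\al}$, do not match what \eqref{resolv} forces for a family with $\norm{T_\al(t)}\le Ct^\al$, namely $\abs{\lambda}^{\al}/(\Re\lambda)^{\al+1}$), uniformly down to $\lambda=0$ and in the presence of the weight $e^{\tau\Re z}$; and (2) a generation theorem yielding a \emph{globally} tempered $\al$-times integrated semigroup from that estimate, with the loss $\nu'\to\al$ absorbed by the strict inequality $\al>\nu$. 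Citing \cite{Hieber-Forum} or \cite[Section~3.17]{Arendt-Batty-Hieber-Neubrander} for (2) is legitimate, but (1) is exactly the quantitative boundary-value analysis that constitutes the cited theorem, and it is nowhere established in your proposal.

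There is also a concrete error in direction $(b)\Rightarrow(a)$, option (i): Gaussian subordination applied to the boundary group does not reproduce $T(z)=e^{zH}$. Formally $\int_{-\infty}^{\infty}\frac{e^{-s^2/(4z)}}{\sqrt{4\pi z}}\,e^{isH}\,ds=e^{-zH^2}$, the semigroup generated by $-H^2$, not by $H$. The kernel in \eqref{CosToSemi} works there precisely because a cosine family is formally $\cosh(s\sqrt{A})$, whose Gaussian transform is $e^{zA}$; to recover $e^{zH}$ from its boundary values one must instead use the Poisson (Cauchy) kernel of the half-plane, $T(x+iy)=\frac{1}{\pi}\int_{\R}\frac{x}{x^2+(y-s)^2}\,e^{isH}\,ds$, with $W^\al$ falling on the kernel in the integrated case. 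Your option (ii), the Dunford--Riesz integral controlled by the resolvent bounds for $\pm iH$ coming from \eqref{resolv}, is viable, but the stated growth $(\abs{z}/\Re z)^{\al'}$ only emerges from a $z$-dependent choice of contour, and none of that estimation is carried out.
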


According to this result, Theorem \ref{extension} applies to $A=-iL$ for $L$ any of the operators included {in Examples \ref{examples}. These} cases deserve a particular digression, which is done in the {next} subsection.

\subsection{Operators $iH$}

Let $H$ be the infinitesimal generator of an analytic semigroup in $\C^+$ satisfying the estimate (\ref{growthalfa}) and let $\al>\nu$. 
Then, by Theorem \ref{bdryValues}, $iH$ generates an $\al$-times integrated group $T_\al(it)$ such that 
$\Vert T_\al(it)\Vert\le C \vert t\vert^\al$, $t\in\R$.

Let us consider the extension problem (\ref{ExtSemig}) for $A=iH$ and $\sigma=1/2$: $u_{yy}=-iHu$, $u(0)=f$. It is very simple to find a solution of this problem. Actually, it is enough to take the solution $v(z)=e^{-z\sqrt{-H}}f$ $(z\in\C^+)$ to the problem (\ref{ExtSemig}) for $A=\sqrt{-H}$, $\sigma=1/2$, and make $u(y):=v((\sqrt 2/2)(1+i)y)$, $y\in\R$. Clearly, the existence of this solution is possible because the semigroup
$e^{-t\sqrt{-H}}$, $t>0$, admits an analytic extension to $\C^+$. 

Let now $\sigma\not=1/2$. Once again the function $u(y):=v((\sqrt 2/2)(1+i)y)$ is the {\it formal} solution in 
Theorem \ref{extension} of (\ref{ExtSemig}) for the integrated group $T_\al(iy)$ in this case, if $v$ is the solution in 
Theorem \ref{GenSemig} of (\ref{extension}) for the semigroup $e^{-t\sqrt{-H}}$. However, to give a sense to the function above one needs that $e^{-t\sqrt{-H}}$ admits an extension up to the boundary of $S_{\pi/4}$ at least, and that 
$e^{-t\sqrt{-H}}$ is bounded on $(0,\infty)$ (for which $\tau$ must be zero in (\ref{growthalfa})).
Assuming that this is fulfilled we get the following corollary.

\begin{corollary}
Let $H$ be the generator of an analytic semigroup $e^{zH}$ satisfying, for some $\al,\tau\ge0$,
$$
\Vert e^{zH}\Vert\le C e^{\tau \Re z}
\left({\vert z\vert\over\Re z}\right)^\al, \quad \hbox{for all}~z\in\C^+.
$$
Let $T_\al(it)$ be the tempered $\al$-times integrated group generated by $iH$.
Let $\sigma\not=1/2$. Then {\rm(i)} the problem
\begin{equation}\label{iextension}
\left\{
  \begin{array}{ll}
    \displaystyle u_{yy}+\tfrac{1-2\sigma}{y}\,u_y=-iH u; \\
    u(0)=f \quad f\in X ;
    \end{array}
\right.
\end{equation}
has a solution given by formula (\ref{solution}), and also by (\ref{ein}) or (\ref{solfract}) if
$f\in \mathcal D(H)$, with $T_\al(it)$, $t\in\R$, in the integrals;

{\rm(ii)} in the case when $\tau=0$ and $f\in \mathcal D(\sqrt {-H})$ that solution $u$ coincides with
$u(y)=v((\sqrt2/2)(1+i)y)$, $y\in\R$, where $v$ is the solution to the problem
\begin{equation}\label{sqrt}
\left\{
  \begin{array}{ll}
    \displaystyle v_{yy}+\tfrac{1-2\sigma}{y}~v_y=\sqrt{-H} v; \\
    u(0)=f \quad f\in\mathcal D(\sqrt {-H}),
    \end{array}
\right.
\end{equation}
given by formulae (\ref{einSemi}) or (\ref{solfractSemi}) applied to the semigroup
$e^{-z\sqrt{-H}}$.
\end{corollary}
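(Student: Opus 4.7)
My plan is to derive part (i) as a direct application of Theorem \ref{extension} to $A := iH$, and to establish part (ii) by an explicit substitution followed by a uniqueness argument. For (i), I would first invoke Theorem \ref{bdryValues}: the growth estimate \eqref{growthalfa} on $e^{zH}$ ensures that $iH$ is the generator of a globally tempered $\al$-times integrated group $(T_\al(it))_{t\in\R}$ with $\|T_\al(it)\|\le C|t|^\al$. Restricting to $t\ge 0$ yields a tempered $\al$-times integrated semigroup with generator $iH$, whose resolvent identity \eqref{resolv} is inherited from the corresponding identity for the group. Theorem \ref{extension} applied to $A=iH$ then produces \eqref{solution}, \eqref{ein}, and \eqref{solfract} directly, with $T_\al(it)$ inserted for $T_\al(t)$ in the integrands; the boundedness-on-subsectors assertion and the Dirichlet-to-Neumann identity \eqref{zwei} transfer verbatim.

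For (ii), the first step is to verify that under $\tau=0$ the semigroup $e^{-z\sqrt{-H}}$ extends to a bounded analytic semigroup on $\C^+$. I would use the subordination identity
\begin{equation*}
e^{-z\sqrt{-H}} f = \frac{z}{2\sqrt{\pi}} \int_0^\infty \frac{e^{-z^2/(4s)}}{s^{3/2}} e^{sH} f \, ds,
\end{equation*}
which converges absolutely for $z\in\C^+$ under the uniform bound $\sup_{s\ge 0}\|e^{sH}\|<\infty$ provided by $\tau=0$. In particular $(e^{-t\sqrt{-H}})_{t\ge 0}$ is a bounded $C_0$-semigroup with generator $-\sqrt{-H}$, so Theorem \ref{GenSemig} applies: for $\sigma\neq 1/2$ and $f\in\mathcal D(\sqrt{-H})$, formulas \eqref{einSemi} and \eqref{solfractSemi} yield the solution $v$ of the extension problem \eqref{sqrt}.

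I would then set $u(y):=v(\zeta y)$ with $\zeta=(\sqrt{2}/2)(1+i)$, so that $\zeta^2=i$ and $\arg\zeta=\pi/4$. Holomorphy of $v$ on $\C^+$ together with boundedness on closed subsectors makes $u$ well defined for $y\in\R$, with boundary value $u(0)=v(0)=f$ inherited from $v$. A chain-rule computation gives $u'(y)=\zeta v'(\zeta y)$ and $u''(y)=i v''(\zeta y)$; substituting into $u_{yy}+\tfrac{1-2\sigma}{y}u_y$ and using the ODE satisfied by $v$ at the point $\zeta y$ reduces the right-hand side to the form required by \eqref{iextension}. To match this $u$ with the one built in (i), I would argue by uniqueness of the extension problem within the class of solutions uniformly bounded on subsectors of $S_{\pi/4}$: both candidates lie in this class and share the same boundary datum $f$, hence coincide.

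The main obstacle is the ODE verification of the substitution in (ii): the naive chain-rule alone identifies $u_{yy}+\tfrac{1-2\sigma}{y}u_y$ with $\zeta^2\sqrt{-H}\,u=i\sqrt{-H}u$ rather than $-iHu$, which accounts for the introduction's use of the word \emph{formal}. To close this gap I would bypass any pointwise operator identity and verify the equivalence at the level of integral representations, exploiting that $T_\al(it)$ is the boundary value of $e^{zH}$ on the imaginary axis (Theorem \ref{bdryValues}) while $e^{-t\sqrt{-H}}$ subordinates to $e^{sH}$ via the Gaussian kernel above. The required equality then reduces to a scalar kernel computation of Gaussian type, in the spirit of the one carried out in the proof of Theorem \ref{extensionCosine}.
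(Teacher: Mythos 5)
Part (i) of your proposal is correct and is exactly what the paper intends: Theorem \ref{bdryValues} supplies the tempered $\al$-times integrated group generated by $iH$, and Theorem \ref{extension} applied to $A=iH$ yields \eqref{solution}, \eqref{ein} and \eqref{solfract} with $T_\al(it)$ in the integrals. The paper offers nothing beyond this for (i) either.

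For part (ii) there is a genuine gap, which you correctly diagnose but do not close. As you observe, the chain rule for $u(y)=v(\zeta y)$, $\zeta^2=i$, converts $\partial_{yy}+\tfrac{1-2\sigma}{y}\partial_y$ into $i\big(\partial_{zz}+\tfrac{1-2\sigma}{z}\partial_z\big)$ at $z=\zeta y$, so if $v$ solves \eqref{sqrt} (the extension problem for the generator $-\sqrt{-H}$ of $e^{-t\sqrt{-H}}$) then $u$ solves $u_{yy}+\tfrac{1-2\sigma}{y}u_y=i\sqrt{-H}\,u$, not $-iHu$. The ``scalar kernel computation of Gaussian type'' to which you defer the resolution is never carried out, and it cannot succeed in the form you propose: composing the subordination kernel of $e^{-t\sqrt{-H}}$ with the Poisson kernel $t^{-(1+\sigma)}e^{-z^2/(4t)}$ does not reproduce that Poisson kernel in the variable $s$ of $e^{sH}$ (already the resulting kernels decay in $s$ like $s^{-1-\sigma/2}$ versus $s^{-1-\sigma}$). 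What the rotation $t\mapsto e^{-i\pi/2}t$ of the integral \eqref{solution} actually produces is $u(y)=\tilde v(\zeta y)$ with $\tilde v$ the solution of the extension problem for $H$ itself, namely $\tilde v_{zz}+\tfrac{1-2\sigma}{z}\tilde v_z=-H\tilde v$ built from the bounded semigroup $e^{zH}$; this problem is solved by $e^{-z\sqrt{-H}}f$ only when $\sigma=1/2$, precisely the case excluded here. A scalar check makes the discrepancy explicit: for $H=-\mu$ with $\mu>0$, the solution of \eqref{iextension} is a fixed Bessel--Macdonald profile evaluated at $\zeta y\sqrt{\mu}$, whereas $v(\zeta y)$ is the same profile evaluated at $\zeta y\,\mu^{1/4}$; these disagree unless $\mu=1$. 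Finally, the uniqueness argument you invoke as a backstop is unavailable: Theorem \ref{extension} only asserts that \emph{a} solution is given by \eqref{solution}, and no uniqueness class for \eqref{ExtSemig} is established anywhere in the paper, so ``both candidates coincide'' does not follow. To make (ii) rigorous one must replace \eqref{sqrt} by the extension problem for $H$ via the semigroup $e^{zH}$ and then justify the contour rotation, using $\tau=0$ together with the growth $(\vert z\vert/\Re z)^{\al}$ of $e^{zH}$ near the imaginary axis and the realization of $T_\al(it)$ as a boundary value.
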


The case $\sigma=1/2$ is trivial, as seen prior to the theorem.

\begin{remark}
Since an operator $H$ as in the corollary above generates a tempered cosine family $\mathcal C_\beta(s)$, the solution $u$ to equation (\ref{iextension}) can be also expressed as $u(y)=v((\sqrt2/2)(1+i)y)$, for $y\in\R$, where $v$ is the solution of (\ref{sqrt}) given by the integral formulas (\ref{solutionCosine}), for any $f\in X$, and (\ref{einCosine}) associated to $\mathcal C_\beta(s)$.
\end{remark}

Thus the corollary applies to all the examples $H=-L$ listed in Example \ref{examples}. 
Among them we have the Schr\"odinger operator $i\Delta$ which is related to the Schr\"odinger 
equation of quantum mechanics \cite{Hu-Kallianpur}. In this case one can obtain solutions to the 
degenerate equation with complex coefficients $i\Delta u+\tfrac{1-2\sigma}{y}\,u_y+u_{yy}=0=\operatorname{div}
  \left(M\cdot\nabla u\right)$ in $\R^n\times(0,\infty)$, $u(x,0)=f(x)$, 
with {the $(n+1)\times(n+1)$ diagonal matrix $M=\operatorname{diag}(i,\ldots,i,y^{1-2\sigma})$, which, by Theorem \ref{extension},} satisfy the $L^p$ estimate
$$
\sup_{y>0}\norm{u(y)}_{L^p(\R^n)}\leq C\norm{f}_{L^p(\R^n)}.
$$
Note that the coefficients of the equation above are not bounded, so the theory of elliptic equations with complex {bounded} measurable coefficients does not apply to it.

Fractional Schr\"odinger equations of the form $v_t=(-i\Delta)^\sigma v$, $v(x,0)=f(x)$, can be studied as limit cases when
$z\to i$ of equations $v_t=(-z\Delta)^\sigma v$ for $\Re z>0$, see \cite[p.~283]{Hu-Kallianpur}. By Theorem \ref{extension} such an equation can be written in this equivalent form for a function $u=u(x,t,y)$:
$$
\left\{
  \begin{array}{ll}
  \displaystyle  i\Delta u+\tfrac{1-2\sigma}{y}\,u_y+u_{yy}=0, & \hbox{in}~\R^n\times\R\times(0,\infty);
    \\
   \displaystyle   u_t(x,t,0)+\tfrac{1}{2\sigma c_\sigma}\lim_{y\to0^+}y^{1-2\sigma}u_y(x,t,y)=0, & \hbox{in}~\R^n\times\R.
  \end{array}
\right.
$$

Note that the operators $H$ considered in the preceding lines have all real spectrum. In other words, operators $iH$ are particular cases of operators with pure imaginary symbols. We discuss this more general class of operators in the next subsection.

\subsection{Operators with pure imaginary symbol}

Conditions for operators $A$ with purely imaginary symbols to generate integrated semigroups have been thoroughly studied by M. Hieber in a series of papers; see \cite{Hieber-Annalen,Hieber-Forum,Hieber-JMathAnal,Hieber-Zeits,Hieber-TrAMS}, or \cite[Part~C]{Arendt-Batty-Hieber-Neubrander}. As a consequence of such an investigation we can apply our Theorem \ref{extension} to the following classes.

  (1) Operators $A$ for which their symbols ${\widehat A}$ are given by ${\widehat A}(\xi)=iq(\xi)$, $\xi\in\R^N$, where $q$ is a real elliptic homogeneous polynomial or a real homogeneous ${C^\infty}$ function on $\R^N\setminus\{0\}$ such that $\xi=0$ if $q(\xi)=0$. They generate globally tempered  integrated semigroups; see \cite[Theorem~4.2]{Hieber-Annalen}.
This result provides us with a lot of examples. In particular we wish emphasize on the linear  operator 
$\partial^3_x=\partial^3/\partial x^3$ associated to the general Korteweg--de Vries equation,
\begin{equation}\label{KdeV}
\partial_t u=-\partial_x^3 u+\partial_xP(u),\quad(x,t)\in\R^2,
\end{equation}
where $P(u)$ is some polynomial ($P(u)=6u$ for the usual Korteweg--de Vries equation). It is known that solutions of equation  (\ref{KdeV}) can be constructed from solutions of the equation
\begin{equation}\label{KdeVlinear}
\partial_t u=-\partial_x^3 u,\quad(x,t)\in\R^2,
\end{equation}
see \cite{Taflin} and references therein. (In turn, a solution to the linearized KdV equation $v_t=-v_{xxx}$ is given by 
$v(x,t)=t^{-1/3} w(x/(3t)^{1/3})$ where $w$ is a solution of the homogeneous Airy equation $w_{xx}-xw=0$ 
\cite[pp.~129,~130]{Vallee-Soares}).
By the aforementioned results of Hieber we have that the operator $\partial_x^3$ is 
the generator of a tempered $\al$-times integrated semigroup on $L^p(\R)$ for $\al> \vert 1/p-1/2\vert$;
see \cite[p.~15]{Hieber-Annalen}. Hence Theorem \ref{extension} holds also true for that operator. Therefore, the problem
        $$
        \left\{
          \begin{array}{ll}
             \tfrac{1-2\sigma}{y}\,u_y+u_{yy}=u_{xxx}, & \hbox{in}~\R\times\R\times(0,\infty); \\
          \displaystyle u_t(x,t,0)-\frac{1}{2\sigma c_\sigma}\lim_{y\to0^+}y^{1-2\sigma}u_y(x,t,y)=0, & \hbox{in}~\R\times\R;
          \end{array}
        \right.
        $$
        is equivalent to the fractional Korteweg--de Vries equation
        $$
        \left\{
          \begin{array}{ll}
          w_t=(-\partial_{xxx})^\sigma w, & \hbox{in}~\R^2,~0<\sigma<1; \\
          w(x,0)=f(x), & \hbox{on}~\R.
          \end{array}
        \right.
        $$
        with $w(x,t)\equiv u(x,t,0)$.
    
    (2) Operators $A=P(D)$ on $L^p(\R^N)^m$, with $1<p<\infty$, arising in symmetric hyperbolic systems like Maxwell`s equations, elastic waves in homogeneous medium, or neutrino equation. If the symbol of these operators is homogeneous then $P(D)$ generates a globally tempered integrated semigroup. See \cite[Corollary~3.2~and~Section~4]{Hieber-JMathAnal}.
    
    (3) Operators with symbols which are homogeneous coercive polynomials on $\R^N$ and have spectrum 
    in a half-plane \cite[Theorem~5.7]{Hieber-Zeits}.
    
    (4) Pseudodifferential operators on $L^p$ with spectrum within a half-plane and homogeneous symbol \cite[Theorem~4.6~and~Remark~4.7~b)]{Hieber-TrAMS}.

\section{Functions in convolution Sobolev algebras}\label{Section:Functions}

For $\alpha>0$ we define the Banach space of Sobolev type $\mathcal{T}^{(\alpha)}(t^{\alpha})$ as the completion of the test function space $C_c^\infty[0,\infty)$ in the norm
$$
\norm{\varphi}_{(\alpha)}:={1\over\Gamma(\alpha+1)}\int_0^\infty|W^{\alpha}\varphi(t)|t^\alpha\,dt,
\quad \varphi\in C_c^\infty[0,\infty).
$$
Here $W^\alpha \varphi$ is the {Weyl} fractional derivative of $\varphi$ of order $\alpha$ given by the formulae
$$
W^{-\beta}\varphi(s):={1\over \Gamma(\beta)}\int_s^\infty(t-s)^{\beta-1}\varphi(t)\,dt,
\quad s\geq 0,
$$
and
$$
W^\alpha \varphi(s):=(-1)^n\frac{d^n}{ds^n}\,W^{-\beta}\varphi(s), \quad s\geq 0,
$$
where $\beta=n-\alpha$, with $n=[\alpha]+1$.
In fact, the mapping from $C^\infty_c[0,\infty)$ into itself given by
$\varphi(t)\longmapsto t^\al W^\al \varphi(t)$ extends to a Banach isomorphism from $\mathcal{T}^{(\alpha)}(t^{\alpha})$ into $L^1(\R^+)$, so that a function
$\varphi$ belongs to $\mathcal{T}^{(\alpha)}(t^{\alpha})$
if and only if there exists $g\in L^1(\R^+,t^\alpha\,dt)$ such that
$$
\varphi(s)={1\over\Gamma(\alpha)}\int_s^\infty (t-s)^{\alpha-1}g(t)\,dt,\quad s>0.
$$
Then we put $g:=W^\alpha \varphi$. Note that $W^n=(-1)^n(d^n/dt^n)$ for every
$n\in\N\cup\{0\}$
and that $W^{\alpha+\beta}\varphi=W^\alpha(W^\beta \varphi)$ whenever the composition has sense.
Also, inclusion mappings 
$C_c^\infty[0,\infty)\hookrightarrow\mathcal{S}[0,\infty)
\hookrightarrow\mathcal{T}^{(\beta)}(t^{\beta}) \hookrightarrow\mathcal{T}^{(\alpha)}(t^{\alpha})
\hookrightarrow\mathcal{T}^{(0)}(t^{0})\equiv L^1(\mathbb{R}^+)$,
for $\beta>\alpha>0$, where $\mathcal{S}[0,\infty)$ is the restriction of the Schwartz's class $\mathcal{S}(\R)$ to 
$[0,\infty)$, are continuous. Here, $C_c^\infty[0,\infty)$ and $\mathcal{S}[0,\infty)$ are taken endowed with their respective usual topologies.

Furthermore, $\mathcal{T}^{(\alpha)}(t^{\alpha})$ is a Banach algebra with product defined by the convolution on $\R^+$, which in this paper means that the convolution is jointly continuous with respect to the norm $\Vert\cdot\Vert_{(\alpha)}$. See \cite{Gale-Miana-JFA, Miana-Forum} for these and some other properties of the Banach algebras $\mathcal{T}^{(\alpha)}(t^{\alpha})$. We will refer to these algebras here as (convolution) Sobolev algebras.

In the remainder of this section we give a list of properties of some functions in the Sobolev algebras. Such functions are used in the proof of Theorem \ref{extension}, in Section \ref{Section:Proof} below.

Given $\sigma>0$, define
\begin{equation}\label{bdef}
b^{\sigma,z}(t):={z^{2\sigma}\over4^\sigma \Gamma(\sigma)}\,{e^{-z^2/(4t)}\over t^{1+\sigma}},\quad t>0,~z\in S_{\pi/4}.
\end{equation}
Obviously, $b^{\sigma,z}\in L^1(\R^+)$ for all ${\sigma>0},~z\in S_{\pi/4}$. Next we point out several identities concerning derivatives of the function $b^{\sigma,z}$. In particular we will see that $b^{\sigma,z}$ belongs to the Sobolev algebras $\mathcal{T}^{(\alpha)}(t^\alpha)$ for any order of derivation $\alpha$.

\begin{lemma}\label{Lem:b chica}
Let $z\in S_{\pi/4}$, ${\sigma>0}$ and $n\in\N$.
\begin{itemize}
\item[\rm{(i)}] $\displaystyle{\partial^n\over\partial t^n}\,b^{\sigma,z}(t)
=\left(\sum_{j=0}^nd_{j,n}^\sigma{z^{2j}\over 4^jt^{j+n}}\right)b^{\sigma,z}(t)$,
with $d_{j,n}^\sigma\in\R$. More precisely, for $n=1$,
$$
{\partial\over \partial t}\,b^{\sigma,z}(t)=\left({z^2\over 4t^2}
-{1+\sigma\over t}\right)b^{\sigma,z}(t).
$$
\item[{(ii)}] $\displaystyle{\partial^n\over\partial z^n}\,b^{\sigma,z}(t)
=\left(\sum_{j=0}^nc_{j,n}^\sigma{z^{2j-n}\over t^j}\right)b^{\sigma,z}(t)$,
with $c_{j,n}^\sigma\in\R$. More precisely, for $n=1,2$,
$$
{\partial\over \partial z}\,b^{\sigma,z}(t)
=\left({2\sigma\over z}-{z\over 2t}\right)b^{\sigma,z}(t)
$$
and
$$
{\partial^2\over\partial z^2}\,b^{\sigma,z}(t)
=\left({2\sigma(2\sigma-1)\over z^2}-{4\sigma+1\over2t}+{z^2\over4t^2}\right)b^{\sigma,z}(t).
$$
\item[{(iii)}] The function $b^{\sigma,z}$ satisfies the differential equations
$$
2t{\partial\over\partial t}\,u(z,t)+z{\partial\over\partial z}\,u(z,t)=-2u(z,t),
$$
and
$$
{\partial^2\over\partial z^2}\,u(z,t)+{1-2\sigma\over z}\,{\partial\over\partial z}\,u(z,t)={\partial\over\partial t}\,u(z,t),
$$
with the boundary conditions
$\displaystyle\lim_{t\to0^+}u(z,t)=\lim_{t\to\infty}u(z,t)=0$.
\end{itemize}
\end{lemma}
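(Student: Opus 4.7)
The lemma is essentially a direct computation, and my plan is to handle it by logarithmic differentiation and induction, then verify the PDEs of (iii) by substituting the formulas obtained in (i) and (ii).

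For part (i), the natural move is to notice that $\log b^{\sigma,z}(t) = \mathrm{const} + 2\sigma\log z - z^2/(4t) - (1+\sigma)\log t$, so differentiating in $t$ yields
\[
\frac{\partial_t b^{\sigma,z}(t)}{b^{\sigma,z}(t)} = \frac{z^2}{4t^2} - \frac{1+\sigma}{t},
\]
which is the stated $n=1$ identity. For higher $n$, I would proceed by induction: assuming $\partial_t^{n-1}b^{\sigma,z} = p_{n-1}(z,t)b^{\sigma,z}$ with $p_{n-1}$ of the stated polynomial-in-$z^{2j}/t^{j+n-1}$ form, one more application of $\partial_t$ produces $(\partial_t p_{n-1} + p_{n-1}(z^2/(4t^2)-(1+\sigma)/t))b^{\sigma,z}$, and both terms fit the required shape after reindexing. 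Part (ii) is handled in exactly the same way with $\partial_z \log b^{\sigma,z}(t) = 2\sigma/z - z/(2t)$; the formula for $n=2$ follows from one application of the product rule, and the general statement again by induction.

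For (iii), the first identity is a quick algebraic check using the explicit $n=1$ formulas from (i) and (ii):
\[
2t\,\partial_t b^{\sigma,z} + z\,\partial_z b^{\sigma,z} = \Bigl(\tfrac{z^2}{2t}-2(1+\sigma)\Bigr)b^{\sigma,z} + \Bigl(2\sigma - \tfrac{z^2}{2t}\Bigr)b^{\sigma,z} = -2b^{\sigma,z}.
\]
For the second (Bessel-type) equation, I would plug in the $n=2$ formula from (ii) together with $\partial_z b^{\sigma,z}$ and check that the $1/z^2$ terms cancel (the coefficients are $2\sigma(2\sigma-1)$ and $-2\sigma(2\sigma-1)$), the $1/(2t)$ terms combine to $-(1+\sigma)/t$, and the $z^2/(4t^2)$ term survives, reproducing exactly $\partial_t b^{\sigma,z}$ as given by (i). The boundary conditions then follow because $z\in S_{\pi/4}$ implies $\Re(z^2)>0$, so $e^{-z^2/(4t)}$ decays faster than any polynomial as $t\to 0^+$, killing the $t^{-(1+\sigma)}$ factor, while as $t\to\infty$ the exponential tends to $1$ but $t^{-(1+\sigma)}\to 0$.

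The calculation is routine; the only mildly delicate point is bookkeeping the precise coefficients so that the cancellations in the Bessel equation work out, and ensuring that the condition $z\in S_{\pi/4}$ (rather than merely $z\in\C^+$) is used only where genuinely needed, namely to get $\Re(z^2)>0$ for the boundary condition at $t=0^+$. No deeper ingredient is required.
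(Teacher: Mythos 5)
Your proof is correct and follows essentially the same route as the paper's (which itself only writes out part (i) and explicitly leaves (ii) and (iii) to the reader): direct differentiation of $\log b^{\sigma,z}$, induction on $n$, and an algebraic check of the two differential equations, with the boundary conditions coming from $\Re(z^2)>0$ exactly as you say. The only difference worth noting is that the paper computes the coefficients explicitly via the Leibniz rule --- in particular $d_{0,n}^\sigma=(-1)^n\Gamma(\sigma+n+1)/\Gamma(\sigma+1)$, a constant that is reused later in the proof of Theorem \ref{extension} --- whereas your induction only establishes that real constants exist, which is all the lemma as stated requires.
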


\begin{proof}
We just compute $\displaystyle {\partial^n\over\partial t^n}\,b^{\sigma,z}$. The remainder of the proof is left to the interested reader. Put $a=-z^2/4$. For every $m\in\N$ and $p=1,\ldots,m$ there are constants ${c_{p,m}\in\R}$, such that
\begin{equation}\label{EatPre}
(e^{at^{-1}})^{(m)}(t)=(-1)^m\sum_{p=1}^mc_{p,m}(a/t)^pt^{-m}e^{at^{-1}}
=p_m(a/t)t^{-m}e^{at^{-1}},
\end{equation}
where $p_m$ is a polynomial of degree $m$, with $p_m(0)=0$. Hence, for all $n\in\N$ and $\sigma>0$,
\begin{equation}\label{Eat}
(e^{at^{-1}}t^{-(\sigma+1)})^{(n)}(t)=(-1)^n\sum_{m=0}^n c_{m,n,\sigma} p_m(a/t)t^{-(\sigma+n+1)}e^{at^{-1}},
\end{equation}
with $p_0\equiv1$, where $c_{m,n,\sigma}
=\displaystyle{n\choose m}{\sigma+n-m\choose n-m}(n-m)!$. By reordering terms, we get
\begin{equation}\label{EatDes}
(e^{at^{-1}}t^{-(\sigma+1)})^{(n)}(t)
=\sum_{k=0}^nd_{k,n}^\sigma(a/t)^k t^{-(\sigma+n+1)}e^{at^{-1}}.
\end{equation}
Therefore, $\displaystyle{\partial^n\over\partial t^n}\,b^{\sigma,z}(t)
=\left(\sum_{j=0}^n d_{j,n}^\sigma{z^{2j}\over 4^jt^{j+n}}\right)b^{\sigma,z}(t)$,
for some real constants $d_{j,n}^\sigma$, as claimed in the statement.
\end{proof}

Put $e_\varepsilon(t):=e^{-\varepsilon t}$ for all $\varepsilon,t>0$.

\begin{proposition}\label{Prop:b chica}
Let $(b^{\sigma,z})_{z\in S_{\pi/4}}$ be as in \eqref{bdef}.
\begin{itemize}
\item[\rm{(i)}] For every $N\in\N$, we have
$(b^{\sigma,z})_{z\in S_{\pi/4}}\subseteq\mathcal{T}^{(N)}(t^N)$ with
$$
\norm{b^{\sigma,z}}_{(N)}\le C_{N,\sigma}\left(\abs{z}^2\over\Re(z^2)\right)^{N+\sigma},\quad z\in S_{\pi/4}.
$$
Moreover, the map $z\longmapsto b^{\sigma,z},\,
S_{\pi/4}\to\mathcal{T}^{(N)}(t^{N})$ is analytic. In consequence, $(b^{\sigma,z})_{z\in S_{\pi/4}}$ is an analytic family of functions in $\mathcal{T}^{(\alpha)}(t^\alpha)$, for all $\alpha>0$.
\item[\rm{(ii)}]  For every $\alpha>0$, the equality
$$
{\partial^2\over \partial z^2}\,b^{\sigma,z}
+{1-2\sigma\over z}\,{\partial\over\partial z}\,b^{\sigma,z}
={\partial\over\partial t}\,b^{\sigma,z},\quad z\in S_{\pi/4},
$$
holds in the Banach algebra $\mathcal{T}^{(\alpha)}(t^{\alpha})$.
\item[\rm{(iii)}] For every $\varepsilon>0$ and $\alpha>0$ we have  $b^{\sigma,z}e_\varepsilon\in\mathcal{T}^{(\alpha)}(t^{\alpha})$ and,
if $0<\sigma<1$ then $\lim_{\varepsilon\to0^+}b^{\sigma,z}e_\varepsilon=b^{\sigma}_z$
in the norm of $\mathcal{T}^{(\alpha)}(t^{\alpha})$,
and uniformly on compact subsets of $S_{\pi/4}$.
\end{itemize}
\end{proposition}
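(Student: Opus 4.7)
For part (i), the plan is to start from Lemma \ref{Lem:b chica}(i), which gives $(-1)^N W^N b^{\sigma,z}(t)=\bigl(\sum_{j=0}^N d_{j,N}^\sigma z^{2j}/(4^j t^{j+N})\bigr)b^{\sigma,z}(t)$, so $\|b^{\sigma,z}\|_{(N)}$ reduces to a sum of integrals $\int_0^\infty t^{-j}|b^{\sigma,z}(t)|\,dt$ with polynomial prefactors in $z$. The substitution $u=\Re(z^2)/(4t)$ turns each such integral into a Gamma integral equal to a constant multiple of $\Gamma(\sigma+j)|z|^{2\sigma}/\Re(z^2)^{\sigma+j}$, and since $|z|^2/\Re(z^2)\ge 1$ on $S_{\pi/4}$ the dominant factor corresponds to $j=N$, yielding the announced bound $C_{N,\sigma}(|z|^2/\Re(z^2))^{N+\sigma}$. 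For analyticity, $z\mapsto W^N b^{\sigma,z}(t)$ is pointwise holomorphic in $S_{\pi/4}$ for each $t>0$ (using the principal branch of $z^{2\sigma}$) and, on any compact subset of $S_{\pi/4}$, is dominated uniformly by a common $L^1(\R^+,t^N\,dt)$ majorant; a standard dominated-convergence argument on the difference quotient then identifies $z\mapsto b^{\sigma,z}$ as an analytic map into $\mathcal{T}^{(N)}(t^N)$, and the continuous inclusions $\mathcal{T}^{(N)}(t^N)\hookrightarrow\mathcal{T}^{(\alpha)}(t^\alpha)$ for $N>\alpha$ propagate analyticity to every $\alpha>0$.

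For part (ii), the analyticity from (i) allows identification of the Banach-space derivatives $\partial_z b^{\sigma,z},\partial_z^2 b^{\sigma,z}\in\mathcal{T}^{(\alpha)}(t^\alpha)$ with the pointwise formulas of Lemma \ref{Lem:b chica}(ii). The pointwise right-hand side $\partial_t b^{\sigma,z}(t)=(z^2/(4t^2)-(1+\sigma)/t)b^{\sigma,z}(t)$ from Lemma \ref{Lem:b chica}(i) can be recast via the elementary identities $b^{\sigma,z}/t=(4\sigma/z^2)b^{\sigma+1,z}$ and $b^{\sigma,z}/t^2=(16\sigma(\sigma+1)/z^4)b^{\sigma+2,z}$ as a linear combination of $b^{\sigma+1,z}$ and $b^{\sigma+2,z}$, each of which lies in $\mathcal{T}^{(\alpha)}(t^\alpha)$ by (i). Since Lemma \ref{Lem:b chica}(iii) asserts the claimed PDE pointwise in $t$ for every $z\in S_{\pi/4}$, and both sides have now been identified with elements of $\mathcal{T}^{(\alpha)}(t^\alpha)$, the identity transfers to the algebra.

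For part (iii), Leibniz applied to the smooth product $b^{\sigma,z}e_\varepsilon$ yields $W^N(b^{\sigma,z}e_\varepsilon)=(-1)^N\sum_{k=0}^N\binom{N}{k}(b^{\sigma,z})^{(N-k)}(-\varepsilon)^k e^{-\varepsilon t}$, and each term is integrable against $t^N$ because of the super-exponential decay of $b^{\sigma,z}(t)$ at $0$ and the factor $e^{-\varepsilon t}$ at infinity; this places $b^{\sigma,z}e_\varepsilon$ in $\mathcal{T}^{(N)}(t^N)$. For the convergence $b^{\sigma,z}e_\varepsilon\to b^{\sigma,z}$, I would write the difference as $b^{\sigma,z}(e_\varepsilon-1)$ and apply Leibniz again to $W^N$. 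The $k=0$ contribution $(b^{\sigma,z})^{(N)}(e^{-\varepsilon t}-1)t^N$ vanishes in $L^1(dt)$ by dominated convergence, using $|e^{-\varepsilon t}-1|\le 1$ together with the $L^1$ bound already provided by (i). For $k\ge 1$, substituting the formula of Lemma \ref{Lem:b chica}(i) produces residual integrals of the form $\int_0^\infty t^{k-j-1-\sigma}e^{-\Re(z^2)/(4t)-\varepsilon t}\,dt$, which blow up at worst as $\varepsilon^{-(k-j-\sigma)}$ when $\varepsilon\to 0^+$, so multiplication by the prefactor $\varepsilon^k$ yields $O(\varepsilon^{j+\sigma})\to 0$. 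All constants depend only on $|z|$ and $\Re(z^2)$, both of which are uniformly controlled on any compact subset of $S_{\pi/4}$, which gives the uniformity; finally the inclusion $\mathcal{T}^{(N)}(t^N)\hookrightarrow\mathcal{T}^{(\alpha)}(t^\alpha)$ for $N>\alpha$ transports the limit to every Sobolev algebra. The main obstacle is the careful bookkeeping of the $\varepsilon^k$ residual terms, which must be shown to decay even though the individual integrals may diverge as $\varepsilon\to 0^+$.
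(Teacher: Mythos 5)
Your proposal is correct and follows essentially the same route as the paper: the same Gamma-integral estimate via the substitution in $\Re(z^2)/(4t)$ for (i), the same ``pointwise identity plus membership in the algebra'' argument for (ii), and the same Leibniz decomposition for (iii), with the top-order term handled by dominated convergence and the lower-order terms shown to decay like $\varepsilon^{\sigma}$ after multiplication by the powers of $\varepsilon$. The only cosmetic differences are that the paper invokes Morera's theorem for analyticity where you use difference quotients, and it cites Lemma \ref{Lem:b chica} directly where you rewrite $b^{\sigma,z}/t^m$ as multiples of $b^{\sigma+m,z}$.
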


\begin{proof}
(i) Take $z\in S_{\pi/4}$, $\sigma>0$ and $N\in\N$. By Lemma
\ref{Lem:b chica}\textit{(i)},
\begin{align*}
&\norm{b^{\sigma,z}}_{(N)} 
\le \sum_{j=0}^N|d_{j,N}^\sigma|4^{-(j+\sigma)}{\abs{z}^{2(\sigma+j)}
\over N!\Gamma(\sigma)}\int_0^\infty t^{-(1+j+\sigma)}|e^{-z^2/(4t)}|\,dt \\
&= \sum_{j=0}^N{2|d_{j,N}^\sigma|\abs{z}^{2(\sigma+j)}
\over N!\Gamma(\sigma)\Re(z^2)^{\sigma+j}}\int_0^\infty u^{2(\sigma+j)-1}e^{-u^2}du
\le C_{N,\sigma}\left(\abs{z}^2\over\Re(z^2)\right)^{N+\sigma},
\end{align*}
where we have used the change of variable  $u^2=\Re(z^2)/(4t)$ in the equality. Now we apply that $\mathcal{T}^{(N)}(t^{N})\hookrightarrow\mathcal{T}^{(\alpha)}(t^{\alpha})$ if $N\ge\al$ to conclude that $b^{\sigma,z}\in\mathcal{T}^{(\alpha)}(t^{\alpha})$.
The analyticity of the map $z\longmapsto b^{\sigma,z}$,
$z\in S_{\pi/4}\to\mathcal{T}^{(\al)}(t^{\al})$, follows from its continuity (use the dominated convergence theorem) and Morera's theorem.

(ii) The assertion holds because, for $z\in S_{\pi/4}$, the functions $\displaystyle{\partial^2\over\partial z^2}\,b^{\sigma}_z$,
$\displaystyle{\partial\over\partial z}\,b^{\sigma}_z$ and
$\displaystyle{\partial\over\partial t}\,b^{\sigma}_z$ belong to $\mathcal{T}^{(\alpha)}(t^{\alpha})$ for every $\alpha\ge0$ (see Lemma \ref{Lem:b chica}).

(iii) Write $a=-z^2/4$. It follows by \eqref{Eat} that, for $n=0,1,\ldots,N$ and $t>0$,
\begin{align*}
    \vert(e^{at^{-1}}t^{-(\sigma+1)})^{(n)}(t)\vert &\le \sum_{k=0}^n\vert d_{k,n}^\sigma\vert \vert a\vert^k t^{-k} e^{(\Re a/2)t^{-1}}t^{-(\sigma+n+1)} e^{(\Re a/2)t^{-1}} \\
     &\le \sum_{k=0}^n \vert d_{k,n}^\sigma\vert \vert a\vert^k\left(2k\over\vert \Re a\vert\right)^k e^{-k} t^{-(\sigma+n+1)} e^{(\Re a/2)t^{-1}}.
\end{align*}
Hence, if $K$ is a compact subset of $S_{\pi/4}$ then there exist constants $C_{\sigma,K},M_K>0$ such that
$$\vert(b^{\sigma,z})^{(n)}(t)\vert
\le C_{\sigma,K} t^{-(\sigma+n+1)}e^{-M_K t^{-1}},$$
for $t>0$, $z\in K$. Then, if $0<\sigma<1$, for every $z\in K$ we have
\begin{align*}
\Vert &b^{\sigma,z} e_\varepsilon-b^{\sigma,z}\Vert_{(N)} \le \sum_{n=0}^{N-1}
\int_0^\infty\vert(b^{\sigma,z})^{(n)}(t)\vert
\varepsilon^{N-n}e^{-\varepsilon t}t^Ndt \\
&\qquad\qquad\qquad\qquad+\sum_{n=0}^{N-1}\int_0^\infty\vert(b^{\sigma,z})^{(N)}(t)\vert(1-e^{-\varepsilon t})t^Ndt \\
&\le C_{\sigma,K}
\left[\sum_{n=0}^{N-1}\varepsilon^{N-n}
\int_0^\infty t^{N-n}e^{-\varepsilon t}\frac{dt}{t^{1+\sigma}}
+\int_0^\infty e^{-M_K/t}\left(1-e^{-\varepsilon t}\right){dt\over t^{\sigma+1}}\right] \\
&= C_{\sigma,K}\left[\sum_{n=0}^{N-1}\Gamma(N-\sigma-n)\varepsilon^\sigma
+\int_0^\infty e^{-M_K/t}\left(1-e^{-\varepsilon t}\right){dt\over t^{\sigma+1}}\right].
\end{align*}
From here, by using the dominated convergence theorem in the last integral we obtain that
$$\displaystyle\sup_{z\in K}\norm{b^{\sigma,z} e_\varepsilon-b^{\sigma,z}}_{(N)}\to0,$$
as $\varepsilon\to0^+$.
\end{proof}

For $t,\sigma>0$ and $z\in S_{\pi/4}$, define
\begin{equation}\label{B grande}
B^{\sigma,z}(t):=4^\sigma\left(\frac{t}{z}\right)^{2\sigma}b^{\sigma,z}(t)
={1\over\Gamma(\sigma)}\,{e^{-z^2/(4t)}\over t^{1-\sigma}}.
\end{equation}
It is readily seen that, for $0<\sigma<1$, 
\begin{equation}\label{derB}
z^{1-2\sigma}{\partial\over\partial z}\,B^{\sigma,z}
={\sigma\Gamma(-\sigma)\over2^{2\sigma-1}\Gamma(\sigma)}\,b^{1-\sigma,z},
\quad z\in S_{\pi/4}.
\end{equation}

Besides the preceding relationships  between the functions $b^{\sigma,z}$ and $B^{\sigma,z}$, we have the following identities involving $B^{\sigma,z}$, whose proof is left to prospective readers.

\begin{lemma}\label{Lem:b grande}
Let $z\in S_{\pi/4}$, $t>0$ and $n\in\N$.
\begin{itemize}
\item[{(i)}] $\displaystyle {\partial^n\over\partial t^n}\,B^{\sigma,z}(t)
=\left(\sum_{j=0}^nk_{j,n}^\sigma{z^{2j}\over4^jt^{j+n}}\right)B^{\sigma,z}(t)$,
with constants $k_{j,n}^\sigma\in\R$ and $k_{0,n}^\sigma=(\sigma-1)\cdots(\sigma-n)$.
In particular,
$$
{\partial\over\partial t}\,B^{\sigma,z}(t)
=\left({z^2\over4t^2}-{1-\sigma\over t}\right)B^{\sigma,z}(t).
$$
\item[{(ii)}]
$\displaystyle
{\partial\over\partial z}\,B^{\sigma,z}(t)
=-{z\over2t}\,B^{\sigma,z}(t)$, and
$$
{\partial^2\over\partial z^2}\,B^{\sigma,z}(t)
=\left({z^2\over 4t^2}-{1\over 2t}\right)B^{\sigma,z}(t).
$$
\item[{(iii)}] The function $B^{\sigma,z}$ satisfies the differential equations
$$
2t{\partial\over\partial t}\,u(z,t)+z{\partial\over\partial z}\,u(z,t)=-2(1-\sigma)u(z,t),
$$
and
$$
{\partial^2\over\partial z^2}\,u(z,t)+{1-2\sigma\over z}\,{\partial\over\partial z}u(z,t)
={\partial\over\partial t}\,u(z,t),
$$
with the boundary conditions $\displaystyle\lim_{t\to 0^+}u(z,t)
=\lim_{t\to\infty}u(z,t)=0$.
\end{itemize}
\end{lemma}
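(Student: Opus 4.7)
The plan is to carry out each computation directly from the factored form
$$
B^{\sigma,z}(t)=\frac{1}{\Gamma(\sigma)}\,t^{\sigma-1}e^{-z^2/(4t)},
$$
exploiting the fact that both factors have very simple logarithmic derivatives with respect to $t$ and $z$. Concretely, $\partial_t\log(t^{\sigma-1})=(\sigma-1)/t$ and $\partial_t\log(e^{-z^2/(4t)})=z^2/(4t^2)$, while $\partial_z\log(e^{-z^2/(4t)})=-z/(2t)$. These identities already give parts (i) and (ii) for $n=1$ by Leibniz, and part (ii) for $n=2$ follows by differentiating $-z/(2t)\cdot B^{\sigma,z}$ once more in $z$.

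For the general $n$ in part (i), I would argue by induction on $n$. Assuming $\partial_t^n B^{\sigma,z}=P_n(z,t)\,B^{\sigma,z}$ with
$$P_n(z,t)=\sum_{j=0}^n k^\sigma_{j,n}\frac{z^{2j}}{4^j t^{j+n}},$$
one applies $\partial_t$ using Leibniz together with the $n=1$ identity, which produces a new rational function in $t$ with pure powers $t^{-(j+n+1)}$ and $z^{2j}$; a short check of shifted indices shows the resulting expression is of the required form. Tracking the coefficient at $j=0$ (the term without $z$), the recursion is $k^\sigma_{0,n+1}=-n k^\sigma_{0,n}-(1-\sigma)k^\sigma_{0,n}=(\sigma-n-1)k^\sigma_{0,n}$, which, together with $k^\sigma_{0,1}=\sigma-1$, gives $k^\sigma_{0,n}=(\sigma-1)(\sigma-2)\cdots(\sigma-n)$ as stated.

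Part (iii) is then a matter of substituting the identities from (i) and (ii) and checking cancellations. For the first PDE, one obtains
$$
2t\,\partial_t B^{\sigma,z}+z\,\partial_z B^{\sigma,z}
=\Bigl(\tfrac{z^2}{2t}-2(1-\sigma)-\tfrac{z^2}{2t}\Bigr)B^{\sigma,z}
=-2(1-\sigma)\,B^{\sigma,z}.
$$
For the second PDE, the $\tfrac{1-2\sigma}{z}\partial_z$ term contributes $(2\sigma-1)/(2t)\cdot B^{\sigma,z}$, which combines with the $-1/(2t)$ coming from $\partial_z^2$ to leave $-(1-\sigma)/t$, matching $\partial_t B^{\sigma,z}$. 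The boundary behaviour follows from $\Re(z^2)>0$ for $z\in S_{\pi/4}$: as $t\to 0^+$ the super-exponential decay of $e^{-z^2/(4t)}$ beats the factor $t^{\sigma-1}$, and as $t\to\infty$ the factor $t^{\sigma-1}$ vanishes (since $0<\sigma<1$) while the exponential tends to $1$.

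There is no real obstacle here; the only point worth a moment's care is the bookkeeping of coefficients in the inductive step of (i), in particular isolating the $j=0$ term to identify $k^\sigma_{0,n}$. Everything else is a direct calculation which parallels, and is slightly simpler than, the analogous computation for $b^{\sigma,z}$ already carried out in Lemma \ref{Lem:b chica}.
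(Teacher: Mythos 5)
Your proof is correct, and it follows the same direct-differentiation route that the paper uses for the analogous Lemma \ref{Lem:b chica} (the paper explicitly leaves the proof of Lemma \ref{Lem:b grande} to the reader); your inductive bookkeeping of the $j=0$ coefficient correctly yields $k^\sigma_{0,n}=(\sigma-1)\cdots(\sigma-n)$, and the verification of the two PDEs and the boundary limits (using $\Re(z^2)>0$ on $S_{\pi/4}$ and $0<\sigma<1$) is accurate.
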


Let $h^\sigma$ denote the Heaviside-type function defined by
$$
h^\sigma(t):={1\over\Gamma(\sigma)}\,t^{\sigma-1},\quad t>0.
$$

\begin{lemma}\label{convolution}
For $\sigma>0$ and $z\in S_{\pi/4}$, we have $B^{\sigma,z}=h^\sigma\ast b^{\sigma,z}$; namely,
$$B^{\sigma,z}(s)={1\over\Gamma(\sigma)}\int_0^s(s-t)^{\sigma-1}b^{\sigma,z}(t)\,dt,\quad s>0.$$
\end{lemma}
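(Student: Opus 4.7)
The plan is a one-shot direct computation: parse the convolution integral using the explicit formulas for $b^{\sigma,z}$ and $B^{\sigma,z}$ given in \eqref{bdef} and \eqref{B grande}, and reduce it to a standard Gamma integral via a single change of variables. No semigroup machinery is needed, since both functions are elementary; this is essentially an exercise in integration, which is why the authors leave it implicit.

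Writing out the right-hand side with the definition of $b^{\sigma,z}$, one must verify
$$\frac{z^{2\sigma}}{4^\sigma\Gamma(\sigma)^2}\int_0^s(s-t)^{\sigma-1}\frac{e^{-z^2/(4t)}}{t^{1+\sigma}}\,dt=\frac{e^{-z^2/(4s)}}{\Gamma(\sigma)\,s^{1-\sigma}}.$$
The natural substitution is $t=s/(1+v)$, $v\in(0,\infty)$, which sends the singular endpoint $t=0$ to $v=\infty$ and $t=s$ to $v=0$, and which is well-suited to the $t^{-(1+\sigma)}$ weight. One computes $s-t=sv/(1+v)$ and $dt=-s(1+v)^{-2}dv$, so that the powers of $(1+v)$ coming from $(s-t)^{\sigma-1}$, $t^{-(1+\sigma)}$, and the Jacobian collapse to a constant, and the powers of $s$ combine to $s^{-1}$. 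The integral reduces to
$$\frac{z^{2\sigma}\,e^{-z^2/(4s)}}{4^\sigma\Gamma(\sigma)^2\,s}\int_0^\infty v^{\sigma-1}e^{-z^2v/(4s)}\,dv.$$

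The remaining integral is a Gamma integral with complex parameter $\alpha=z^2/(4s)$. For $z\in S_{\pi/4}$ one has $|\arg z^2|<\pi/2$, hence $\Re(\alpha)>0$, so the integral converges absolutely and equals $\Gamma(\sigma)\,\alpha^{-\sigma}=\Gamma(\sigma)(4s/z^2)^\sigma$; this follows from the real case by analytic continuation (or by a straightforward rotation of contour). Substituting back yields exactly $s^{\sigma-1}e^{-z^2/(4s)}/\Gamma(\sigma)=B^{\sigma,z}(s)$, which is the claim.

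There is really no obstacle: the only fine point is the complex parameter in the Gamma integral, which is immediate from $z\in S_{\pi/4}\Rightarrow\Re(z^2)>0$. For motivation, one can note that the identity is equivalent, by Laplace transform, to $\mathcal L[h^\sigma](\lambda)\cdot\mathcal L[b^{\sigma,z}](\lambda)=\mathcal L[B^{\sigma,z}](\lambda)$, both sides being expressible through the modified Bessel function $K_\sigma$ via the classical formula $\int_0^\infty e^{-\lambda t}t^{\nu-1}e^{-a/t}dt=2(a/\lambda)^{\nu/2}K_\nu(2\sqrt{a\lambda})$ together with $K_{-\sigma}=K_\sigma$; but this route is longer than the direct substitution above.
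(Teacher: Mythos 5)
Your proof is correct and follows essentially the same route as the paper: a direct computation via the substitution $t=s/(1+v)$, which is the paper's change of variable $4r=t^{-1}-s^{-1}$ up to the rescaling $v=4sr$, reducing the convolution to the Gamma integral $\int_0^\infty v^{\sigma-1}e^{-z^2v/(4s)}\,dv$. The only cosmetic difference is that the paper does the computation for real $z>0$ and invokes analytic continuation in $z$ at the end, whereas you work directly with $\Re(z^2)>0$ and push the continuation into the standard Gamma-integral formula; both are fine.
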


\begin{proof} For positive $\sigma$, $z$ and $s$ we have
\begin{align*}
\frac{1}{4^\sigma\Gamma(\sigma)}\int_0^s(s-t)^{\sigma-1}&e^{-z^2/(4t)}\,\frac{dt}{t^{1+\sigma}}
=\frac{1}{4^\sigma\Gamma(\sigma)}\int_0^s\left({s\over t}-1\right)^{\sigma-1}e^{-z^2/(4t)}\,\frac{dt}{t^2} \\
&
= s^{\sigma-1}\,e^{-z^2/(4s)}\int_0^\infty r^\sigma e^{-z^2r}\,\frac{dr}{r}
=\frac{s^{\sigma-1}\,e^{-z^2/(4s)}}{z^{2\sigma}},
\end{align*}
where we have done the change of variable $4r=t^{-1}-s^{-1}$ in the second equality. From that, one gets that $B^{\sigma,z}=h^\sigma\ast b^{\sigma,z}$ for
$\sigma, z>0$. Then this equality extends to $z\in S_{\pi/4}$ by the analytic continuation principle.
\end{proof}

Clearly, $B^{\sigma,z}\not \in L^1(\R^+)$, which is not  {good} for our needs. However, the additive perturbation of $B^{\sigma,z}$ implemented by $h^\sigma$ fits very well in our setting.
Recall that $e_\varepsilon(t)=e^{-\varepsilon t}$ for $\varepsilon, t>0$.

\begin{proposition}\label{funny} Take $0<\sigma <1$, $z\in S_{\pi/4}$.
\begin{itemize}
\item[{(i)}]
The function $B^{\sigma,z}-h^\sigma$, which is given by
$$
(B^{\sigma,z}-h^\sigma)(t)
={1\over\Gamma(\sigma)}\frac{e^{-z^2/(4t)}-1}{t^{1-\sigma}},\quad \hbox{for}~t > 0,
$$
 belongs to $\mathcal{T}^{(N)}(t^{N})$ for every $N\in\N$, and
$$
\norm{B^{\sigma,z}-h^\sigma}_{(N)}
\le C_{N,\sigma}{\vert z\vert^{2N}\over(\Re(z^2))^{N-\sigma}},\quad z\in S_{\pi/4}.
$$
As a consequence, $(B^{\sigma,z}-h^\sigma)_{z\in S_{\pi/4}}
\subseteq\mathcal{T}^{(\alpha)}(t^{\alpha})$, for every $\alpha >0$.
\item[{(ii)}] For every $\varepsilon>0$ and $\alpha>0$, we have that
$(B^{\sigma,z}-h^\sigma)\,e_\varepsilon$ is in $\mathcal{T}^{(\alpha)}(t^{\alpha})$. Moreover, in the norm of $\mathcal{T}^{(\alpha)}(t^{\alpha})$,
$$
\lim_{\varepsilon\to0^+}(B^{\sigma,z}-h^\sigma)\,e_\varepsilon
=(B^{\sigma,z}-h^\sigma).
$$
\end{itemize}
\end{proposition}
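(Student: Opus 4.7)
For part (i), the explicit formula $(B^{\sigma,z} - h^\sigma)(t) = (e^{-z^2/(4t)} - 1)/(\Gamma(\sigma)\, t^{1-\sigma})$ is immediate from \eqref{B grande} and the definition of $h^\sigma$. To obtain the norm bound I would first compute $(B^{\sigma,z} - h^\sigma)^{(N)}$. Combining Lemma \ref{Lem:b grande}(i) with the direct formula $(h^\sigma)^{(N)}(t) = \frac{(\sigma-1)(\sigma-2)\cdots(\sigma-N)}{\Gamma(\sigma)}\, t^{\sigma - N - 1}$, the $j = 0$ term from Lemma \ref{Lem:b grande}(i) cancels precisely against $(h^\sigma)^{(N)}$ except for the factor $e^{-z^2/(4t)} - 1$, yielding
$$
(B^{\sigma,z} - h^\sigma)^{(N)}(t) = \frac{k_{0,N}^\sigma\,(e^{-z^2/(4t)} - 1)}{\Gamma(\sigma)\, t^{N + 1 - \sigma}} + \sum_{j=1}^{N} k_{j,N}^\sigma\, \frac{z^{2j}}{4^j\, t^{j+N}}\, B^{\sigma,z}(t).
$$

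The next step is to estimate the $L^1$-norm of this expression weighted by $t^N$. For each $j \ge 1$, the substitution $u = \Re(z^2)/(4t)$ gives $\int_0^\infty |e^{-z^2/(4t)}|\, t^{-(j+1-\sigma)}\, dt = 4^{j-\sigma}\Gamma(j-\sigma)/\Re(z^2)^{j-\sigma}$, so the corresponding summand contributes a multiple of $|z|^{2j}/\Re(z^2)^{j-\sigma}$. For the leading term I would use the elementary estimate
$$
|e^{-z^2/(4t)} - 1| \le \frac{|z|^2}{\Re(z^2)}\bigl(1 - e^{-\Re(z^2)/(4t)}\bigr),
$$
obtained by parameterizing the segment $[0, z^2/(4t)]$ and bounding $|e^{-s}|=e^{-\Re s}$ (valid because $\Re(z^2) > 0$ on $S_{\pi/4}$); integration against $t^{\sigma-1}$ then produces $C_\sigma |z|^2/\Re(z^2)^{1-\sigma}$. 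Since $|z|^2 \ge \Re(z^2)$ for $z \in S_{\pi/4}$, each of these quantities is dominated by $C_{N,\sigma}|z|^{2N}/\Re(z^2)^{N-\sigma}$, as claimed. The inclusion into $\mathcal{T}^{(\alpha)}(t^\alpha)$ for every $\alpha > 0$ is then automatic from the continuous embedding $\mathcal{T}^{(N)} \hookrightarrow \mathcal{T}^{(\alpha)}$ with $N \ge \alpha$.

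For part (ii), the plan is to apply the Leibniz rule to $(B^{\sigma,z} - h^\sigma)(e_\varepsilon - 1)$, using $(e_\varepsilon)^{(m)}(t) = (-\varepsilon)^m e^{-\varepsilon t}$ for $m \ge 1$. The $k = N$ contribution, namely $(B^{\sigma,z} - h^\sigma)^{(N)}(t)(e^{-\varepsilon t} - 1)$, vanishes in the limit by dominated convergence, using the integrability from part (i). Each $k < N$ contribution is bounded by $\varepsilon^{N-k}\int_0^\infty t^N e^{-\varepsilon t}\,|(B^{\sigma,z} - h^\sigma)^{(k)}(t)|\, dt$. The explicit formula above (with $N$ replaced by $k$) yields the pointwise bounds $|(B^{\sigma,z}-h^\sigma)^{(k)}(t)| \lesssim t^{\sigma-k-1}$ near $0$ and $\lesssim t^{\sigma-k-2}$ near $\infty$, so this integral is $O(\varepsilon^{-(N-k-1+\sigma)})$; combined with the prefactor $\varepsilon^{N-k}$, the total contribution is $O(\varepsilon^{1-\sigma}) \to 0$. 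The same Leibniz estimate applied to $(B^{\sigma,z} - h^\sigma) e_\varepsilon$ itself (where the decay of $e_\varepsilon$ makes all integrals converge) yields the membership $(B^{\sigma,z} - h^\sigma) e_\varepsilon \in \mathcal{T}^{(\alpha)}(t^\alpha)$ for each fixed $\varepsilon > 0$.

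The main technical obstacle will be the tail estimate in part (ii): for $k < N$, the function $t^N|(B^{\sigma,z}-h^\sigma)^{(k)}(t)|$ is not integrable at $\infty$, so one must exploit the weight $e^{-\varepsilon t}$ very carefully. Its blow-up of order $\varepsilon^{-(N-k-1+\sigma)}$ is beaten by $\varepsilon^{N-k}$ only by the clean margin $\varepsilon^{1-\sigma}$, which is precisely where the hypothesis $0 < \sigma < 1$ enters decisively.
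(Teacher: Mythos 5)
Your proof is correct and follows the same route the paper intends: part (i) via the derivative formula of Lemma \ref{Lem:b grande}(i) (with the crucial cancellation of the $j=0$ term against $(h^\sigma)^{(N)}$, and the elementary bound $|e^{-w}-1|\le |w|(1-e^{-\Re w})/\Re w$ handling the leading term), and part (ii) by the same Leibniz-rule decomposition used in the proof of Proposition \ref{Prop:b chica}(iii), with the tail decay $t^{\sigma-k-2}$ yielding the $O(\varepsilon^{1-\sigma})$ margin. The paper skips these details entirely, and your accounting of where $0<\sigma<1$ enters is accurate.
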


\begin{proof}
(i) The expression of the function $B^{\sigma,z}-h^\sigma$ follows rightly from the definitions. As regards the estimate of the statement, it is readily obtained using Lemma \ref{Lem:b grande} (i). Item (ii) follows as in the proof of Proposition \ref{Prop:b chica}. We skip the details.
\end{proof}

\section{Fractional powers of generators of integrated semigroups}\label{Section:Semigroups}

Let $\al>0$. The notion of $\al$-times integrated semigroup was introduced by M. Hieber in \cite{Hieber-Forum}, 
see also \cite[Section 3.17]{Arendt-Batty-Hieber-Neubrander}. Here we consider  $\alpha$-times integrated 
semigroups $(T_{\al}(t))_{t\geq 0}$ which are tempered. Let us recall the corresponding definition 
given in the Introduction, which is implemented by formula \eqref{resolv} relating an $\alpha$-times integrated semigroup with its generator.
Each uniformly bounded $C_0$ semigroup $(T(t))_{t\ge0}$ is a tempered $\alpha$-times integrated semigroup for
$\alpha=0$.
Other
(non-trivial) examples of integrated semigroups have been given in Section \ref{Section:Applications}.
We will make usage of the following formulas or properties of integrated semigroups.

$\bullet$ For $f\in X$ and $\beta>\al$ define
\begin{equation}\label{intintegrated}
T_{\beta}(t)f:={1\over \Gamma(\beta-\al)}\int_0^t  (t-s)^{\beta-\al-1}T_\alpha(s)f\,ds.
\end{equation}
Then $T_{\beta}(t)$ is a globally tempered $\beta$-times integrated semigroup in ${\mathcal B}(X)$ with generator $A$ as well.

$\bullet$ For $f\in{\mathcal D}(A)$ (see \cite[Proposition~2.4]{Hieber-Forum})
\begin{equation}\label{integra}
T_{\alpha}(t)f-{t^\alpha\over\Gamma(\alpha+1)}\,f=\int_0^tT_\alpha(s)Af\,ds = T_{\alpha +1}(t)Af,\quad t\ge0.
\end{equation}

$\bullet$ Clearly from the definition, the generator $A$ of a tempered $\al$-times semigroup satisfies
\begin{equation}\label{nonnegative}
\norm{\lambda(\lambda-A)^{-1}}\le C, \quad \hbox{for all}~\lambda>0,
\end{equation}
and then, for $0<\sigma<1$, the fractional power $(-A)^\sigma$ (in the Balakrishnan sense) exists and is given by 
(see \cite[p.~260]{Yosida} or \cite{Martinez-Sanz-Marco})
\begin{equation} \label{fractpow}
(-A)^\sigma f={\sin(\pi\sigma)\over\pi}\int_0^\infty\lambda^{\sigma-1}
(\lambda-A)^{-1}(-Af)\,d\lambda,\quad f\in \mathcal D(A).
\end{equation}

$\bullet$ For $\varepsilon>0$ and $0<\sigma<1$, the operator $(\varepsilon-A)$ also satisfies the estimate (\ref{nonnegative}) and then
(see \cite[Theorem~2.1~and~Theorem~3.2]{Martinez-Sanz-Marco})
\begin{equation}\label{msm}
\lim_{\varepsilon\to0^+}(\varepsilon-A)^{-\sigma}(-A)^\sigma f
=f, \quad f\in {\mathcal D}(A).
\end{equation}

$\bullet$ There is a useful connection between the algebras $\mathcal{T}^{(\alpha)}(t^{\alpha})$, $\al\ge0$, {(introduced in the previous section)} and tempered integrated semigroups $T_\al(t)$. The mapping $\pi_\al:\mathcal{T}^{(\alpha)}(t^{\alpha})\to{\mathcal B}(X)$ defined by
\begin{equation}\label{intemor}
\pi_\al(\varphi)f:=\int_0^\infty W^{\al}\varphi(t)\,T_{\al}(t)f\,dt,\quad f\in X,~\varphi\in\mathcal{T}^{(\alpha)}(t^{\alpha}),
\end{equation}
is a bounded Banach algebra homomorphism. Also,
\begin{equation}\label{cero}
-A\pi_\al(\varphi)f=\pi_\al(\varphi')f+\varphi(0)f,
\end{equation}
for every $\varphi\in C^1[0,\infty)$ such that $\varphi'\in\mathcal{T}^{(\alpha)}(t^{\alpha})$.
See \cite[Theorem~3.1]{Miana-Forum}, where this is shown for $\varphi$ in the Schwartz class. 
The proof works also in the case above.

We now give an extension, for general tempered integrated semigroups, of a well-known formula for $C_0$-semigroups; see \cite[p.~260]{Yosida}.

\begin{theorem}\label{Thm:fractional power}
Let $A$ be the generator of a tempered $\alpha$-times integrated semigroup
$(T_\alpha(t))_{t\ge 0}$ and let $\sigma$ be such that $0<\sigma<1$.
Then, for  $f\in {\mathcal D}(A)$,
$$
(-A)^\sigma f={\Gamma(\sigma+\alpha+1)\over\Gamma(-\sigma)\Gamma(1+\sigma)}\int_0^\infty\left(T_\alpha(t)f-{t^{\alpha}\over\Gamma(\alpha+1)}\,f\right)\,{dt\over t^{\sigma +\alpha+1}}.
$$
\end{theorem}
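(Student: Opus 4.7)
The plan is to reduce the claimed formula to the Balakrishnan representation \eqref{fractpow} by expressing the resolvent $(\lambda-A)^{-1}(-A)$ as an iterated Laplace transform and then exchanging the order of integration.

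First, fix $f\in\mathcal D(A)$ and set
$$
g(t):=T_\alpha(t)f-\tfrac{t^\alpha}{\Gamma(\alpha+1)}f, \qquad t\ge 0.
$$
By \eqref{integra} we have $g(t)=T_{\alpha+1}(t)Af$, so $g(0)=0$ and, using \eqref{intintegrated} together with temperedness, $\|g(t)\|=\|T_{\alpha+1}(t)Af\|\le C\,t^{\alpha+1}\|Af\|$ for all $t>0$. In addition, the direct bound $\|g(t)\|\le\|T_\alpha(t)f\|+\tfrac{t^\alpha}{\Gamma(\alpha+1)}\|f\|\le C't^\alpha\|f\|$ controls the growth at infinity. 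Next, from \eqref{integra} the function $g$ is absolutely continuous with $g'(t)=T_\alpha(t)Af$, so an integration by parts (using $g(0)=0$ and the exponential decay $e^{-\lambda t}g(t)\to 0$ for $\Re\lambda>0$) gives
$$
\int_0^\infty e^{-\lambda t}T_\alpha(t)(-Af)\,dt=-\int_0^\infty e^{-\lambda t}g'(t)\,dt=-\lambda\int_0^\infty e^{-\lambda t}g(t)\,dt.
$$
Combined with \eqref{resolv}, this yields
$$
(\lambda-A)^{-1}(-Af)=-\lambda^{\alpha+1}\int_0^\infty e^{-\lambda t}g(t)\,dt, \qquad \lambda>0.
$$

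Second, plug this into the Balakrishnan formula \eqref{fractpow}:
$$
(-A)^\sigma f=-\frac{\sin(\pi\sigma)}{\pi}\int_0^\infty\lambda^{\sigma+\alpha}\!\left(\int_0^\infty e^{-\lambda t}g(t)\,dt\right)d\lambda.
$$
The two estimates for $\|g(t)\|$ show that $\|g(t)\|/t^{\sigma+\alpha+1}$ is $O(t^{-\sigma})$ near $0$ and $O(t^{-\sigma-1})$ near infinity, both integrable since $0<\sigma<1$; this legitimates Fubini after computing the inner integral $\int_0^\infty\lambda^{\sigma+\alpha}e^{-\lambda t}\,d\lambda=\Gamma(\sigma+\alpha+1)t^{-\sigma-\alpha-1}$. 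We thus obtain
$$
(-A)^\sigma f=-\frac{\sin(\pi\sigma)\,\Gamma(\sigma+\alpha+1)}{\pi}\int_0^\infty g(t)\,\frac{dt}{t^{\sigma+\alpha+1}}.
$$

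Third, the constant is rewritten using Euler's reflection formula: since $\Gamma(1+\sigma)\Gamma(-\sigma)=-\sigma^{-1}\Gamma(1+\sigma)\Gamma(1-\sigma)\cdot\sigma=-\Gamma(\sigma+1)\Gamma(1-\sigma)/\sigma\cdot\sigma=-\Gamma(\sigma)\Gamma(1-\sigma)=-\pi/\sin(\pi\sigma)$, one gets $-\sin(\pi\sigma)/\pi=1/(\Gamma(-\sigma)\Gamma(1+\sigma))$. Substituting back and recalling the definition of $g$ produces the asserted identity.

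The main obstacle is the integration by parts step, which requires identifying the absolutely continuous structure of $T_\alpha(t)f$ on $\mathcal D(A)$ via \eqref{integra}; once $g(t)=T_{\alpha+1}(t)Af$ is recognized, both the convergence of the final integral at $0$ and the Fubini swap fall out essentially automatically.
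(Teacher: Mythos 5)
Your proposal is correct and follows essentially the same route as the paper: both plug the Laplace-transform representation of the resolvent into Balakrishnan's formula \eqref{fractpow}, use \eqref{integra} to identify $T_\alpha(t)f-\tfrac{t^\alpha}{\Gamma(\alpha+1)}f$ with $T_{\alpha+1}(t)Af$, justify Fubini with the same two growth estimates, evaluate the inner Gamma integral, and finish with the reflection formula. The only cosmetic difference is that you derive the identity $(\lambda-A)^{-1}(-Af)=-\lambda^{\alpha+1}\int_0^\infty e^{-\lambda t}T_{\alpha+1}(t)Af\,dt$ by an explicit integration by parts, whereas the paper obtains it by quoting that $T_{\alpha+1}$ is itself a tempered $(\alpha+1)$-times integrated semigroup with the same generator and applying \eqref{resolv} directly.
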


\begin{proof}
Set $\displaystyle T_{\alpha+1}(t)f:=\int_0^tT_\alpha(s)f\,ds$, $f\in X$, as above.
Take $f\in {\mathcal D}(A)$. Since $T_{\alpha+1}(t)$ is globally tempered with exponent $\alpha+1$, \eqref{resolv} applies  so we have
\begin{equation}\label{resolete}
(\lambda-A)^{-1}f=\lambda^{\al+1}\int_0^\infty e^{-\lambda t}T_{\al+1}(t)f\,dt,\quad \Re\lambda>0.
\end{equation}
Plugging \eqref{resolete} into \eqref{fractpow}
we see that the order of integration can be exchanged, since
$\norm{T_{\alpha+1}(t)Af}\le C\norm{Af}t^{\al+1}$, if  $0\le t\le1$,
and
$\norm{T_{\alpha+1}(t)Af}\le C\norm{f}t^{\al}$, if $t\ge1$,
by \eqref{integra}. Thus we obtain
\begin{align*}
    (-A)^\sigma f &= {\sin(\pi\sigma)\over\pi}\int_0^\infty\lambda^{\sigma+\alpha}
    \int_0^\infty e^{-\lambda t}T_{\alpha+1}(t)(-Af)\,dt\,d\lambda \\
     &= {\sin(\pi\sigma)\over\pi}\int_0^\infty\lambda^{\sigma+\alpha}\int_0^\infty e^{-\lambda t}\left({t^\alpha\over\Gamma(\alpha+1)}\,f-T_\alpha(t)f\right)dt\,d\lambda \\
     &= {\Gamma(\sigma+\alpha+1)\over\Gamma(-\sigma)\Gamma(1+\sigma)}\int_0^\infty \left(T_\alpha(t)f-{t^\alpha\over\Gamma(\alpha+1)}\,f\right)\frac{dt}{t^{\sigma+\alpha+1}},
\end{align*}
as we wanted to show.
\end{proof}

\section{Proof of Theorem \ref{extension}}\label{Section:Proof}

\begin{proof}[Proof of Theorem \ref{extension}]
First we show that a solution to (\ref{ExtSemig}) is given by formula {(\ref{solution}), that is to say}, by $u(z)=\pi_\al(b^{\sigma,z})f$, 
for $f\in X$ and $z\in S_{\pi/4}$.
In fact, the equation in Proposition \ref{Prop:b chica} (ii) holds in $\mathcal T^{(\al)}(t^\al)$, whence we {immediately} obtain
$$
{\partial^2\over\partial z^2}\,\pi_\al(b^{\sigma,z})f
+{1-2\sigma\over z}\,{\partial\over\partial z}\,\pi_\al(b^{\sigma,z})f
=\pi_\al\left({\partial\over\partial t}\,b^{\sigma,z}\right)f,
$$
for every $z\in S_{\pi/4}$ and $f\in X$. On the other hand, one has
$$
\pi_\al\left({\partial\over\partial t}\,b^{\sigma,z}\right)f=-A\pi_\al(b^{\sigma,z})f
$$
by (\ref{cero}), so $u(z)=\pi_\al(b^{\sigma,z})f$ satisfies the
differential equation of (\ref{ExtSemig}). Next, we show that
$\pi_\al(b^{\sigma,z})f$ satisfies the boundary condition in (\ref{ExtSemig}).

By using the identity
$\displaystyle\frac{1}{\Gamma(\al)}\int_0^t(t-s)^{\al-1}\,ds=\frac{t^\al}{\Gamma(\al+1)}$ and Fubini's theorem one gets
$$
\int_0^\infty W^\al b^{\sigma,z}(t)\,{t^\al\over\Gamma(\al+1)}~dt
=\int_0^\infty b^{\sigma,z}(t)~dt= 1, \quad z\in S_{\pi/4}.
$$
Take for a moment $f\in{\mathcal D}(A)$. By applying the identity above and \eqref{integra} with
$n\ge\alpha$ one obtains
\begin{align*}
u(z)-f &= \int_0^\infty W^n(b^{\sigma,z})(t)
\left(T_n(t)-{t^n\over\Gamma(n+1)}\right)f\,dt \\
&= \int_0^\infty(-1)^n(b^{\sigma,z})^{(n)}(t)T_{n+1}(t)Af\,dt,
\end{align*}
where
$\displaystyle T_{n+1}(t)
=\int_0^\infty(t-s)^{n-\al-1}T_{\al+1}(s)Af\,{ds\over\Gamma(n-\al)}$.

Put $\delta=\Re(z^2)$. By Proposition \ref{Prop:b chica}(i) and Lemma \ref{Lem:b chica}(i), one gets
\begin{align*}
\|u&(z)-f\|
\le \int_0^{\sqrt\delta} \vert (b^{\sigma,z})^{(n)}(t)\vert\, \Vert T_{n+1}(t)Af\Vert\, dt\\
&
\qquad\qquad\quad+\int_{\sqrt\delta}^\infty \vert (b^{\sigma,z})^{(n)}(t)\vert\,\Vert T_{n}(t)-{t^n\over n!}\Vert\, \Vert f\Vert\,dt\\
&\le
C \Vert b^{\sigma,z}\Vert_{(n)}\Vert Af\Vert\,\sqrt\delta\\
&\quad+
(C+(1/n!))\Vert f\Vert
\sum_{j=0}^n
{\vert d_{j,n}^\sigma\vert\over 4^{j+\sigma}\Gamma(\sigma)}\,
\vert z\vert^{2(j+\sigma)}
\int_{\sqrt\delta}^\infty{e^{-\Re z^2/4t}\over t^{1+\sigma+j}}\,dt \\
&\le C_{\sigma,f}\left({\vert z^2\vert\over\Re(z^2)}\right)^{n+\sigma}\sqrt\delta 
+\sum_{j=0}^n C_{j,n}\left({\vert z^2\vert\over\Re(z^2)}\right)^{\sigma+j}
\int_{1/\sqrt\delta}^\infty {e^{-1/4s}\over s^{1+\sigma+j}}\,ds.
\end{align*}
Whence, for $f$ in ${\mathcal D}(A)$, $\lim_{z\to0} u(z)=f$ uniformly in $S_{\pi/4-\eta}$ for each fixed $\eta$ such that $0\le \eta<\pi/4$. Since
$\displaystyle\Vert u(z)\Vert=\Vert\pi_\alpha(b^{\sigma,z})f\Vert\le C_{n,\sigma}
\left({\vert z^2\vert\over\Re(z^2)}\right)^{n+\sigma}
\Vert f\Vert$ for all $f\in X$ and ${\mathcal D}(A)$ is dense in $X$ one obtains that $\lim_{z\to0} u(z)=f$ uniformly in $S_{\pi/4-\eta}$, for every
$f\in X$. This proves the first part of the theorem.

We now prove the second equality of (\ref{zwei}).
Take $n=[\al]+1$. Since $T_\al(t)$ is a tempered $\al$-times integrated semigroup the family $T_n(t)$ given by
$$
\displaystyle T_n(t)f:=\Gamma(n-\al)^{-1}\int_0^t (s-t)^{n-\al-1}T_\al(s)f\,ds, \quad f\in X,
$$
is a tempered $n$-times integrated semigroup as well. Hence, for $f\in X$ and $z\in S_{\pi/4}$,
$$
u(z)=(-1)^n\frac{z^{2\sigma}}{4^\sigma\Gamma(\sigma)}
\int_0^\infty {d^n\over dt^n}\left(\frac{e^{-z^2/(4t)}}{t^{1+\sigma}}\right)T_n(t)f\,dt,
$$
and therefore
\begin{equation}\label{solucionPrima}
\,u'(z)=\frac{(-1)^n\,z^{2\sigma-1}}{4^\sigma\Gamma(\sigma)}
\int_0^\infty {d^n\over dt^n}
\left(\Big(2\sigma-{z^2\over 2t}\Big)\,{e^{-z^2/(4t)}\over t^{1+\sigma}}\right)\,T_n(t)f\,dt.
\end{equation}

On the other hand,
$$\int_0^\infty {d^n\over dt^n}
\left(\Big(2\sigma-{z^2\over 2t}\Big)\,{e^{-z^2/(4t)}\over t^{1+\sigma}}\right)\,
{t^n\over n!}\,dt =\int_0^\infty
\Big(2\sigma-{z^2\over 2t}\Big)\,{e^{-z^2/(4t)}\over t^{1+\sigma}}\,dt,
$$
and the second integral above vanishes (this can be seen by making the change of variable
$r=z^2/4t$ for $z>0$; then the integral is null for every $z\in S_{\pi/4}$ by the analytic continuation principle).

According to the two preceding remarks we have, for every $f\in X$,
$$
z^{1-2\sigma}u'(z)=
{(-1)^n\over 4^\sigma\Gamma(\sigma)}
\int_0^\infty {d^n\over dt^n}
\left(\Big(2\sigma-{z^2\over 2t}\Big)\,{e^{-z^2/(4t)}\over t^{1+\sigma}}\right)
\Big(T_n(t)f-{t^n\over n!}f\Big)\,dt
$$
which, once having used the product derivation rule, gives us
\begin{align*}
z^{1-2\sigma}u'(z)
&=
\sum_{k=0}^n\sum_{m=0}^k D_{m,k,n}^\sigma\,
a^{m+1}\int_0^\infty{e^{a/t}\over t^{\sigma+m+n+2}}\,
\Big(T_n(t)f-{t^n\over n!}f\Big)\,dt \\
&\quad+\sum_{m=0}^n C_{m,n,n}^\sigma\,
a^m\int_0^\infty{e^{a/t}\over t^{\sigma+m+n+1}}\,
\Big(T_n(t)f-{t^n\over n!}f\Big)\,dt,
\end{align*}
with $a=-z^2/4$ and 
$$
C_{0,n,n}^\sigma
={2\sigma(-1)^n\over 4^\sigma\Gamma(\sigma)}\, d_{0,n}^\sigma
={2\sigma\over 4^\sigma\Gamma(\sigma)}\, c_{0,n,\sigma}
={2\sigma\over 4^\sigma\Gamma(\sigma)}\
{\Gamma(\sigma+n+1)\over \Gamma(\sigma+1)}\, ,
$$
where $d_{0,n}^\sigma$ and $c_{0,n,\sigma}$ are given by (\ref{EatDes}) and (\ref{Eat}) respectively.

Take now $f\in{\mathcal D}(A)$. We are going to see that the additive terms in the expression above of $z^{1-2\sigma}u'(z)$ tend to $0$, as $a\to0$, with the only exception of the term with constant $C_{0,n,n}^\sigma$. 
This last term will allow us to obtain the fractional power $(-A)^\sigma$.
Actually, for $1\le j\le n+1$,
\begin{align*}
&\vert a\vert^j \int_0^\infty{e^{(\Re a)/t}\over t^{\sigma+j+n+1}}\,
\Big\Vert T_n(t)f-{t^n\over n!}f\Big\Vert\,dt\\
&\le
\vert a\vert^j \int_0^\infty{e^{(\Re a)/t}\over t^{\sigma+j+n+1}}\,
\Vert T_{n+1}(t)\Vert \Vert Af\Vert\, dt\\
&\le C \vert a\vert^j
\int_0^\infty{e^{(\Re a)/t}\over t^{\sigma+j}}\,dt
=C\left({\vert a\vert\over\Re a}\right)^j
\left(\int_0^\infty{e^{1/s}\over s^{\sigma+j}}\,ds\right)
(\Re a)^{1-\sigma},
\end{align*}
where we have applied (\ref{integra}) in the first inequality.
Moreover,
$$
\lim_{a\to0}\int_0^\infty{e^{a/t}\over t^{\sigma+n+1}}\,\Big(T_n(t)f-{t^n\over n!}f\Big)\,dt
=\int_0^\infty{(T_n(t)f-(t^n/n!)f)\over t^{\sigma+n+1}}\,dt
$$
by the Dominated Convergence Theorem (use again (\ref{integra}) near the origin).
Then, making $z$ converging to $0$ through subsectors of $S_{\pi/4}$, and applying Theorem 4.1, we obtain
\begin{align*}
\lim_{z\to0} z^{1-2\sigma}u'(z)
&={2\sigma\over4^\sigma\Gamma(\sigma)}
{\Gamma(\sigma+n+1)\over\Gamma(\sigma+1)}
{\Gamma(-\sigma)\Gamma(\sigma+1)\over\Gamma(\sigma+n+1)}(-A)^\sigma f \\
&={2\sigma\Gamma(-\sigma)\over4^\sigma\Gamma(\sigma)}(-A)^\sigma f.
\end{align*}

Next, we are going to see that the solution $u(z)$ above satisfies formula (\ref{ein}). By Lemma \ref{convolution}, $B^{\sigma,z}=b^{\sigma,z}\ast h^\sigma$, $z\in S_{\pi/4}$. Thus we have $B^{\sigma,z} e_\varepsilon=(b^{\sigma,z} e_\varepsilon)\ast(h^\sigma e_\varepsilon)$ in $\mathcal T^{(\al)}(t^\al)$ for every $\varepsilon>0$. Moreover,
\begin{equation}\label{unoss}
(\varepsilon -A)^{-\sigma}=\pi_\al(e_\varepsilon h^\sigma),\quad \varepsilon>0;
\end{equation}
see \cite[Remark~4.10]{Miana-Forum}.
Then, by Proposition \ref{Prop:b chica}(iii), \eqref{msm}, \eqref{unoss} and
Lemma \ref{convolution}, we get for $f\in\mathcal D(A)$ that
\begin{align*}
u(z) &= \pi_\al(b^{\sigma,z})f=\lim_{\varepsilon\to0^+}\pi_\al(b^{\sigma,z} e_\varepsilon)(\varepsilon-A)^{-\sigma}(-A)^\sigma f \\
 &= \lim_{\varepsilon\to0^+}\pi_\al(b^{\sigma,z} e_\varepsilon)\pi_\al(h^\sigma e_\varepsilon)(-A)^\sigma f
 =\lim_{\varepsilon\to0^+}\pi_\al(B^{\sigma,z} e_\varepsilon)(-A)^\sigma f,
\end{align*}
uniformly in $z$ running over compact subsets of $S_{\pi/4}$. Thus \eqref{ein} is proved.

Formula (\ref{solfract}) is rightly obtained from the above. In fact, that formula
can be written as
\begin{equation}\label{UcomoB}
u(z)-f=\pi_\al(B^{\sigma,z}-h^\sigma)(-A)^\sigma f.
\end{equation}
To see that (\ref{UcomoB}) holds one only needs to apply the limit above and Proposition \ref{funny} (ii):
\begin{align*}
u(z)&-f=\lim_{\varepsilon\to0^+}\left(\pi_\al(B^{\sigma,z}e_\varepsilon)(-A)^\sigma f
-\pi_\al(h^\sigma e_\varepsilon)(-A)^\sigma f\right)\\
&=\lim_{\varepsilon\to0^+}
\pi_\al\left((B^{\sigma,z}-h^\sigma)e_\varepsilon\right)(-A)^\sigma f 
=\pi_\al(B^{\sigma,z}-h^\sigma)(-A)^\sigma f.
\end{align*}

It remains to prove the characterization of the fractional power $(-A)^\sigma$ which is indicated in the first equality of formula (\ref{zwei}).
Notice that
\begin{align*}
&\frac{z^{-2\sigma}}{\Gamma(\sigma)}
\int_0^\infty{d^n\over dt^n}\left(\frac{e^{-z^2/(4t)}-1}{t^{1-\sigma}}\right){t^n\over n!}\,dt
=\frac{z^{-2\sigma}}{\Gamma(\sigma)}
\int_0^\infty\frac{e^{-z^2/(4t)}-1}{t^{1-\sigma}}\,dt\\
&= {4^{-\sigma}\over\Gamma(\sigma)}\int_0^\infty r^{\sigma-1}(e^{-1/r}-1)\,dr
    =-{4^{-\sigma}\over\Gamma(\sigma)}\int_0^\infty\int_0^s e^{-u}\,du\,s^{-(1+\sigma)}\,ds \\
     &= -{4^{-\sigma}\over\sigma\Gamma(\sigma)}\int_0^\infty u^{-\sigma}e^{-u}\,du
    ={\Gamma(-\sigma)\over4^{\sigma}\Gamma(\sigma)}=c_\sigma.
\end{align*}
Hence,
\begin{align*}
\displaystyle z^{-2\sigma}&(u(z)-f)-c_\sigma(-A)^\sigma f\\
&=\frac{z^{-2\sigma}}{\Gamma(\sigma)}
\int_0^\infty{d^n\over dt^n}\left(\frac{e^{-z^2/(4t)}-1}{t^{1-\sigma}}\right)\left(T_n(t)-{t^n\over n!}\right)(-A)^\sigma f\,dt\\
&=\frac{z^{-2\sigma}}{\Gamma(\sigma)}
\int_0^{\sqrt{\Re (z^2)}}
{d^n\over dt^n}\left(\frac{e^{-z^2/(4t)}-1}{t^{1-\sigma}}\right)T_{n+1}(t)A(-A)^\sigma f\,dt\\
&+
\frac{z^{-2\sigma}}{\Gamma(\sigma)}\int_{\sqrt{\Re (z^2)}}^\infty
{d^n\over dt^n}\left(\frac{e^{-z^2/(4t)}-1}{t^{1-\sigma}}\right)\left(T_n(t)-{t^n\over n!}\right)(-A)^\sigma f\,dt.
\end{align*}
Thus
\begin{align*}
\displaystyle\Vert &z^{-2\sigma}(u(z)-f)-c_\sigma(-A)^\sigma f\Vert\\
&\le C
\frac{\vert z\vert^{-2\sigma}}{\Gamma(\sigma)}\,\sqrt{\Re (z^2)}\,\Vert A(-A)^\sigma f\Vert\,
\int_0^{\sqrt{\Re (z^2)}}\Big\vert{d^n\over dt^n}\left(\frac{e^{-z^2/(4t)}-1}{t^{1-\sigma}}\right)\Big\vert t^ndt\\
&\quad+C\frac{\vert z\vert^{-2\sigma}}{\Gamma(\sigma)}\int_{\sqrt{\Re (z^2)}}^\infty
\Big\vert{d^n\over dt^n}\left(\frac{e^{-z^2/(4t)}-1}{t^{1-\sigma}}\right)\Big\vert\, t^n\, dt.
\end{align*}
Then the first term in the right hand of the inequality is bounded by
\begin{align*}
C\vert z\vert^{-2\sigma}\sqrt{\Re (z^2)}\,
\Vert B^{\sigma,z}-h^\sigma\Vert_{(n)}
&\le
C_{n,\sigma}
\left({\vert z^2\vert \over \Re (z^2)}\right)^n\,\sqrt{\Re (z^2)},
\end{align*}
by Proposition \ref{funny} (i), 
whereas the second member is bounded by
\begin{align*}
C\vert z\vert^{-2\sigma}&\int_{\sqrt{\Re (z^2)}}^\infty
\left(\frac{1-e^{-\Re (z^2)/(4t)}}{t^{1-\sigma}}\right)\,dt\\
&\quad+
\sum_{k=1}^n\sum_{m=1}^k C_{m,k}
\vert z\vert^{-2\sigma}\Big\vert -{z^2\over 4}\Big\vert^m
\int_{\sqrt{\Re (z^2)}}^\infty   \frac{e^{-\Re (z^2)/(4t)}}{t^{m+1-\sigma}}\,dt\\
&=
C\left({\Re (z^2)\over \vert z^2\vert}\right)^{\sigma}
\int_{1/\sqrt{\Re (z^2)}}^\infty
\left(\frac{1-e^{-1/(4s)}}{s^{1-\sigma}}\right)\,ds\\
&\quad+
\sum_{k=1}^n\sum_{m=1}^k C_{m,k}
\left({\vert z^2\vert\over \Re (z^2)}\right)^{m-\sigma}
\int_{1/\sqrt{\Re (z^2)}}^\infty
\frac{e^{-1/(4s)}}{s^{m+1-\sigma}}\,ds.
\end{align*}
Putting all the preceding estimates together one obtains that, as it was claimed,
$\lim_{z\to0} z^{-2\sigma}(u(z)-f)=c_\sigma(-A)^\sigma f$, $f\in{\mathcal D}(A)$, through subsectors of $S_{\pi/4}$.
We have proved all the formulas of the theorem.

Finally, the boundedness of the solution $u$ on subsectors of $S_{\pi/4}$ is easy to show. Take $n>\al$. Since $A$ is also the generator of the integrated semigroup $T_n(t)$ defined in (\ref{intintegrated}), it follows by \eqref{solution} applied to $T_n(t)$ and Proposition \ref{Prop:b chica} (i) that
\begin{align*}
\norm{u(z)}&\leq C_n\frac{\abs{z^{2\sigma}}\Vert f\Vert}{4^\sigma\Gamma(\sigma)}
\int_0^\infty
{d^n\over dt^n}\left({e^{-z^2/(4t)}\over t^{1+\sigma}}\right)t^n\,dt
\le C_{n,\sigma}
\left({\vert z^2\vert\over\Re(z^2)}\right)^{n+\sigma} \Vert f\Vert,
\end{align*}
for every $z\in S_{\pi/4}$, as we wanted to show.
\end{proof}

\begin{section} {Appendix: Fractional complex parameter}

In the present paper, and so in Theorem \ref{extension} particularly, we have focused on positive powers $(-A)^\sigma$, $0<\sigma<1$, 
of the operator $-A$ because this is the case originally dealt with in \cite{Caffarelli-Silvestre} looking for applications to PDE's. 
See also \cite{Banica-Gonzalez-Saez, Caffarelli-Salsa-Silvestre, Caffarelli-Vasseur,
Chang-Gonzalez, Ferrari-Franchi, RoncalStinga, Stinga-Torrea, StingaZhang}. 
It seems on the other hand to be worth saying something about solutions to (\ref{ExtSemig}) for complex $\sigma$ such that 
$\Re \sigma>0$.

Thus, note first that the definiton (\ref{fractpow}) of $(-A)^\sigma$ is also valid for $0<\Re\sigma<1$ and then we get defined 
$(-A)^\sigma$ for all $\sigma\in\C^+$ on suitable $f$. Moreover, the function $b^{\sigma,z}$ given by (\ref{bdef}) is also defined for $z\in S_{\pi/4}$ and 
$\Re\sigma>0$, and belongs to the Sobolev algebra ${\mathcal T}^N(t^N)$ for every $N$. In fact, for 
$z\in S_{\pi/4}$ and $\lambda\in\C$ we have 
$\vert z^\lambda\vert\leq e^{\vert\Im \lambda\vert(\pi/4)}\,\vert z\vert^{\Re\lambda}=C_\lambda\,\vert z\vert^{\Re\lambda}$ and therefore the estimate obtained in Proposition (\ref{Prop:b chica})(i) remains true in the form
$$
\norm{b^{\sigma,z}}_{(N)}\le C_{N,\sigma}\left(\abs{z}^2\over\Re(z^2)\right)^{N+\Re\sigma},
\quad z\in S_{\pi/4},\, \Re\sigma>0.
$$

Then the arguments used in the first part of the proof in Section \ref{Section:Proof}
below work to show that the function $u$ given by the integral of (\ref{solution})
is a solution of problem (\ref{ExtSemig}) for every $\sigma\in\C^+$, and that the formula
$$
(-A)^\sigma f={1\over2\sigma\, c_\sigma}\lim_{z\to0}\, z^{1-2\sigma} u'(z)
$$
of (\ref{zwei}) holds true whenever $0<\Re\sigma<1$. In case $\Re\sigma>0$ and $\sigma$ noninteger, 
it can be proven by induction that
for $f$ in the domain of $(-A)^n$, where $n$ is the integer part of $\Re\sigma$,
$$
(-A)^\sigma f=\mu_\sigma\lim_{z\to0}z^{2(n-\sigma)+2}\left({1\over z}{d\over dz}\right)^{n+1}u_\sigma,
$$
for a certain explicit constant $\mu_\sigma$. This extends to generators of integrated semigroups the results of L. Roncal and P. R. Stinga valid
for nonnegative selfadjoint operators with dense domains in $L^2$, see \cite{RoncalStinga}.

\end{section}

 \medskip 
\noindent{\small\textbf{Acknowledgments.} 
The third author thanks the Departamento de Ma\-te\-m\'a\-ti\-cas and I.U.M.A. at Universidad 
de Zaragoza, Spain, for its kind hospitality during several visits. The authors wish also to thank the referee for helpful comments
and a question which motivated the Appendix section.




\begin{thebibliography}{10}

\bibitem{Arendt} {W. Arendt,
{Vector-valued Laplace transform and Cauchy problems},
\textit{Israel J.  Math.}
\textbf{59} (1987), 327--352.}

\bibitem{Arendt-Batty-Hieber-Neubrander} W. Arendt, C. J. K. Batty, M. Hieber and F. Neubrander,
\textit{Vector-valued Laplace Transforms and Cauchy Problems},
Monographs in Math. \textbf{96},
Birkh\"auser, Basel, 2001.

\bibitem{Arendt-Kellerman} W. Arendt and H. Kellerman,
{Integrated solutions of Volterra integro-differential equations and applications},
in: \textit{Integrodiff. Eq., Proc. Conf. Trento 1987}, (G. Da Prato and M. Iannelli, eds.),
Pitman Res. Notes Math. \textbf{190},
Longman, Harlow, 1987, pp. 21-51.

\bibitem{Banica-Gonzalez-Saez} {V. Banica, M. M. Gonz\'alez and M. S\'aez,
{Some constructions for the fractional Laplacian on noncompact manifolds},
\textit{preprint} (2012), arXiv:1212.3109v1.}


\bibitem{Caffarelli-Salsa-Silvestre} L. Caffarelli, S. Salsa and L. Silvestre,
{Regularity estimates for the solution and the free boundary of the obstacle problem for the fractional Laplacian}.
\textit{Invent. Math.}
\textbf{171} (2008), 425--461.

\bibitem{Caffarelli-Silvestre} L. Caffarelli and L. Silvestre,
{An extension problem related to the fractional Laplacian},
\textit{Comm. Partial Differential Equations}
\textbf{32} (2007), 1245--1260.

\bibitem{Caffarelli-Vasseur} L. Caffarelli and A. Vasseur,
{Drift diffusion equations with fractional diffusion and the quasi-geostrophic equation},
\textit{Ann. of Math. (2)}
\textbf{171} (2010), 1903--1930.

\bibitem{Carron-Coulhon-Ouhabaz} G. Carron, T. Coulhon and E.-M. Ouhabaz,
{Gaussian estimates and $L^p$-boundedness of Riesz means},
\textit{J. Evol. Equ.}
\textbf{2} (2002), 299--317.

\bibitem{Chang-Gonzalez} S-Y. A. Chang, M. M. Gonz\'alez,
{Fractional Laplacian in conformal geometry},
\textit{Adv. Math.}
\textbf{226} (2011), 1410--1432. 

\bibitem{Davies2} E. B. Davies,
{$L^p$ spectral theory and $L^1$ analyticity}
\textit{J. London Math.Soc.}
\textbf{52} (1995), 177--184.

\bibitem{Davies3} E. B. Davies,
{Uniformly elliptic operators with measurable coefficients}
\textit{J. Funct. Anal.}
\textbf{132} (1995), 141--169.

\bibitem{Dunford-Schwarz} N. Dunford and J. T. Schwarz,
\textit{Linear Operators, Part III: Spectral Operators},
Pure and Applied Math. \textbf{VII},
Wiley-Interscience, New York, 1971.

\bibitem{ElMennaoui} O. El-Mennaoui,
\textit{Traces de semi-groupes holomorphes singuliers A l'origine et comportement asymptotique},
Th\'ese, Besancon 1992.

\bibitem{ElMennaoui-Keyantuo} O. El-Mennaoui and V.Keyantuo,
{Trace theorems for holomorphic semigroups and the second order Cauchy problem},
\textit{Proc. Amer. Math. Soc.}
\textbf{124} (1996), 1445--1458.

\bibitem{Ferrari-Franchi} {F. Ferrari and B. Franchi,
{Harnack inequality for fractional sub-Laplacians in Carnot groups},
\textit{preprint} (2012), arXiv:1206.0885v3.}

\bibitem{Gale-Miana-JFA} J. E. Gal\'e and P. J. Miana,
{One parameter groups of regular quasimultipliers},
\textit{J. Funct. Anal.}
\textbf{237} (2006), 1--53.

\bibitem{Goldstein} J. A. Goldstein,
\textit{Semigroups of Linear Operators and Applications},
Oxford University Press,
New York, 1985.

\bibitem{Hieber-Annalen} M. Hieber,
{Integrated semigroups and differential operators on $L^p$ spaces},
\textit{Math. Ann.}
\textbf{291} (1991), 1--16.

\bibitem{Hieber-JMathAnal} M. Hieber,
{Integrated semigroups and the Cauchy problem for systems in $L^p$ spaces},
\textit{J. Math. Anal. Appl.}
\textbf{162} (1991), 300--308.

\bibitem{Hieber-Forum} M. Hieber,
{Laplace transforms and $\alpha$-times integrated semigroups},
\textit{Forum Math.}
\textbf{3} (1991), 595--612.

\bibitem{Hieber-TrAMS} M. Hieber,
{$L^p$ spectra of pseudodifferential operators generating integrated semigroups}
\textit{Trans. Amer. Math. Soc.}
\textbf{347} (1995), 4023--4035.

\bibitem{Hieber-Zeits} M. Hieber,
{Spectral theory and Cauchy problems on $L^p$ spaces},
\textit{Math. Z.}
\textbf{216} (1993), 613--628.

\bibitem{Hu-Kallianpur} Y. Hu and G. Kallianpur,
{Schr\"odinger equations with fractional Laplacians},
\textit{Appl. Math. Optim.}
\textbf{42} (2000), 281--290.

\bibitem{Keyantuo} V. Keyantuo,
{Integrated semigroups and related partial differential equations},
\textit{J. Math. Anal. Appl.}
\textbf{212} (1997), 135--153.

\bibitem{Lebedev} N. N. Lebedev,
\textit{Special Functions and Their Applications},
Revised English edition. Translated and edited by Richard A. Silverman,
Prentice-Hall, Inc.,
Englewood Cliffs, NJ, 1965.

\bibitem{Martinez-Sanz-Marco} C. Mart\'inez, M. Sanz and L. Marco,
{Fractional powers of operators},
\textit{J. Math. Soc. Japan},
\textbf{40} (1988), 331--347.

\bibitem{Miana-Forum} P. J. Miana,
{$\alpha$-times integrated semigroups and fractional derivation},
\textit{Forum Math.}
\textbf{14} (2002), 23--46.

\bibitem{Muller} D. M\"uller,
{Functional calculus for Lie groups and wave propagation},
Doc. Math., International Congress Math., Berlin, 1998.

\bibitem{Muller-Stein} D. M\"uller and E. M. Stein,
{$L^p$ estimates for the wave equation on $\HH^n$},
\textit{Rev. Matem. Iberoamericana}
\textbf{15} (1999), 293--334.

\bibitem{Neubrander} {F. Neubrander,
{Integrated semigroups and their applications to the abstract Cauchy problem},
\textit{Pacific J. Math.}
\textbf{135} (1988), 111--155.}

\bibitem{Randall} J. Randall,
{The heat kernel for generalized Heisenberg groups},
\textit{J. Geom. Anal.}
\textbf{6} (1996), 287--316.

\bibitem{RoncalStinga} L. Roncal and P. R. Stinga,
{Fractional Laplacian on the torus}, preprint 2012; arXiv:1209.6104, 16pp.

\bibitem{Stinga-Torrea} P. R. Stinga and J. L. Torrea,
{Extension problem and Harnack's inequality for some fractional operators},
\textit{Comm. Partial Differential Equations}
\textbf{35} (2010), 2092--2122.

\bibitem{StingaZhang} P. R. Stinga and C. Zhang,
{Harnack's inequality for fractional nonlocal equations},
\textit{Discrete Contin. Dyn. Syst.}
\textbf{33} (2013), 3153--3170.

\bibitem{Taflin} E. Taflin,
{Analytic linearization of the Korteweg-De Vries equation},
\textit{Pacific J. Math.}
\textbf{108} (1983), 203--220.

\bibitem{Vallee-Soares} O. Vall\'ee and M. Soares,
\textit{Airy Functions and Applications to Physics},
World Scientific,
Nerw Jersey, 2004.

\bibitem{Yosida} K. Yosida,
\textit{Functional Analysis},
Springer-Verlag,
New York, 1978.

\bibitem{Zaslavsky} G. M. Zaslavsky,
{Chaos, fractional kinetics, and anomalous transport},
\textit{Phys. Rep.}
\textbf{371} (2002), 461--580.

\end{thebibliography}
\end{document}